\newtheorem{theorem}{Theorem}
\newtheorem{definition}[theorem]{Definition}
\begin{document}

\title{ON THE IRREDUCIBILITY OF THE EXTENSIONS OF BURAU AND GASSNER REPRESENTATIONS}
\author{Mohamad N. Nasser \and Mohammad N. Abdulrahim}

\address{Mohamad N. Nasser\\
         Department of Mathematics and Computer Science\\
         Beirut Arab University\\
         P.O. Box 11-5020, Beirut, Lebanon}
\email{m.nasser@bau.edu.lb}

\address{Mohammad N. Abdulrahim\\
         Department of Mathematics and Computer Science\\
         Beirut Arab University\\
         P.O. Box 11-5020, Beirut, Lebanon}
\email{mna@bau.edu.lb}

\maketitle

\begin{abstract}

Let $Cb_n$ be the group of basis conjugating automorphisms of a free group $\mathbb{F}_n$, and $C_n$ the group of conjugating automorphisms of $\mathbb{F}_n$. Valerij G. Bardakov has constructed representations of $Cb_n$, $C_n$ in the groups $GL_n(\mathbb{Z}[{t_1}^{\pm1}, \ldots ,{t_n}^{\pm 1}])$ and in $GL_n(\mathbb{Z}[{t}^{\pm1}])$ respectively, where $t_1, \ldots, t_n, t$ are indeterminate variables. We show that these representations are reducible and we determine the irreducible components of the representations in $GL_n(\mathbb{C})$, which are obtained by giving values to the variables above. Next, we consider the tensor product of the representations of $Cb_n$, $C_n$ and study their irreduciblity in the case $n=3$.

\end{abstract}

\medskip

\renewcommand{\thefootnote}{}
\footnote{\textit{Key words and phrases.}  Braid group, Free group, Magnus representation, Burau representation, Gassner representation.}
\footnote{\textit{Mathematics Subject Classification.} Primary: 20F36.}

\vskip 0.1in

\section{Introduction} 

The braid group on $n$ strings, $B_n$, is the abstract group with generators $\sigma_1,\ldots,\sigma_{n-1}$ and a presentation as follows:
\begin{align*}
&\sigma_i\sigma_{i+1}\sigma_i = \sigma_{i+1}\sigma_i\sigma_{i+1} ,\hspace{0.5cm} i=1,2,\ldots,n-2,\\
&\sigma_i\sigma_j = \sigma_j\sigma_i , \hspace{2.25cm} |i-j|>2.
\end{align*}

The pure braid group, $P_n$, is defined as the kernel of the homomorphism $B_n \mapsto S_n$ defined by $\sigma_i \mapsto (i \hspace{0.2cm} i+1)$, $1\leq i \leq n-1$, where $S_n$ is the symmetric group of $n$ elements.

The most famous linear representation of $B_n$ is Burau representation [4], and the most famous linear representation of $P_n$ is Gassner representation [3].

One of the generalizations of the braid group $B_n$ is the group $C_n$ of conjugating automorphisms of $\mathbb{F}_n$, the free group of rank $n$ with the generators $x_1,\ldots ,x_n$ (see [6]). Here $C_n$ is defined to be the subgroup of $Aut(\mathbb{F}_n)$ that satisfies for any $\phi \in C_n$, $\phi(x_i)={f_i}^{-1}x_{\Pi(i)}f_i$, where $\Pi$ is a permutation on $\{1, 2,\ldots, n\}$ and $f_i$ lies in $\mathbb{F}_n$. By the Theorem of Artin [3], the group $B_n$ admits a faithful representation in $Aut(\mathbb{F}_n)$ such that an automorphism $\beta$ satisfies the following two conditions:
\begin{itemize}
\item[(1)] $\beta(x_i)={f_i}^{-1}x_{\Pi(i)}f_i$, \hspace{1cm} $1 \leq i \leq n$,
\item[(2)]  $\beta(x_1x_2\ldots x_n)=x_1x_2\ldots x_n$,
\end{itemize}
where $\Pi$ is a permutation on $\{1,2,\ldots,n\}$ and $f_i\in \mathbb{F}_n$. Recall that condition (1) is the defining condition for an automorphism of $\mathbb{F}_n$ to be in $C_n$, the group of conjugating automorphisms.

Also, one of the generalizations of the pure braid group $P_n$ is the group of basis conjugating automorphisms $Cb_{n}$ [5], which is the subgroup of $C_n$ that satisfies for any $\phi \in Cb_{n}$, $\phi(x_i)=f_i^{-1}x_if_i$, where $f_i\in \mathbb{F}_n$.

$P_n$ is a normal subgroup of $B_n$ and $Cb_n$ is a normal subgroup of $C_n$. In addition, the quotient groups $B_n\slash P_n$ and $C_n\slash Cb_n$ are isomorphic to $S_n$. A. G. Savuschkina [6] proved that $C_n$ is a semidirect product $C_n=Cb_n \leftthreetimes S_n$.

Denote $\mathbb{F}_n^{\prime}=[\mathbb{F}_n,\mathbb{F}_n],$ the commutator subgroup of $\mathbb{F}_n$, and $\mathbb{A}_n=\mathbb{F}_n / \mathbb{F}_n^{\prime}$. The natural map from $Aut(\mathbb{F}_n)$ into $Aut(\mathbb{A}_n)$ is an epimorphism. The kernel of this map is the group of IA-automorphisms denoted by $IA(\mathbb{F}_n)$ (see [1]).

We consider $Cb_n$ as a subgroup of $IA(\mathbb{F}_n)$, the group of IA-automorphisms of the group $\mathbb{F}_n$.

In [2], Bardakov uses Magnus representation defined in [3, Ch. 3] to construct a linear representation $\rho: IA(\mathbb{F}_n) \mapsto GL_n(\mathbb{Z}[{t_1}^{\pm1},\ldots,{t_n}^{\pm 1}])$. Restricting the representation $\rho$ to $Cb_n$ we obtain a representation $\hat{\rho}_G$, which is an extension of Gassner representation of $P_n$. Putting $t_1=\ldots=t_n$ in the representation $\hat{\rho}_G$, we obtain a representation $\hat{\rho}_B$ of $C_n$, which is an extension of Burau representation of $B_n$.

We study, in section 3, the irreducibility of the representation $\hat{\rho}_G$. We prove that $\hat{\rho}_G$  is reducible (Theorem 3). In order not to get a one-dimensional representation, we assume that one of the $t_i$'s not one. Without loss of generality, we set $t_n\neq 1$. We prove that the complex specialization of its $(n-1)$th degree composition factor $\hat{\phi}_G$ is irreducible if and only if $t_i\neq 1$ for all $1\leq i < n$ (Theorem 4).

Similarly, we study in section 4 the irreducibility of the representation $\hat{\rho}_B$. We prove that $\hat{\rho}_B$ is reducible (Theorem 6). Also we prove that the complex specialization of its $(n-1)$th degree composition factor $\hat{\phi}_B$ is irreducible (Theorem 7).

In section 5, we prove, for $n=3$, that the tensor product representation $\hat{\phi}_G(t_1,t_2,t_3) \otimes \hat{\phi}_G(m_1,m_2,m_3)$ is irreducible if and only if $(t_1,t_2,t_3)$ and $(m_1,m_2,m_3)$ are distinct vectors (Theorem 8).

In section 6, we prove, for $n=3$, that the tensor product representation $\hat{\phi}_B(t) \otimes \hat{\phi}_B(m)$ is irreducible if and only if $t \neq m$ (Theorem 9).

\section{Preliminaries} 

The group of conjugating automorphisms, $C_n$, is the subgroup of $Aut(\mathbb{F}_n)$ that satisfies for any $\phi \in C_n$, $\phi(x_i)={f_i}^{-1}x_{\Pi(i)}f_i$, where $\Pi$ is a permutation on $\{1,2,\ldots,n\}$ and $f_i\in \mathbb{F}_n$.

The group of basis conjugating automorphisms, $Cb_{n}$, is the subgroup of $C_n$ that satisfies for any $\phi \in Cb_{n}$, $\phi(x_i)=f_i^{-1}x_if_i$, where $f_i\in \mathbb{F}_n$.

J. McCool [5] proved that the group ${Cb}_n$ is generated by the automorphisms 
$$ \epsilon_{ij}:
\left\{\begin{array}{l}
x_i \mapsto {x_j}^{-1}x_ix_j$,\hspace{0.5cm} $i\neq j\\ 
x_l \mapsto x_l$, \hspace{1.5cm} $l\neq i,\
\end{array}\right.$$
where $1\leq i\neq j \leq n$.

Recall that $IA(\mathbb{F}_n)$ is generated by the automorphisms $\epsilon_{ij}$, $1\leq i\neq j\leq n$ and the automorphisms 
$$ \epsilon_{ijk}:
\left\{\begin{array}{l}
x_i \mapsto x_i[x_j,x_k]$, \hspace{0.5cm} $k\neq i,j\\ 
x_l \mapsto x_l$, \hspace{1.6cm} $l\neq i,\\
\end{array}\right.$$
where $[a,b]=a^{-1}b^{-1}ab$ [2].

In [6], we have $C_n=Cb_n \leftthreetimes S_n$. This means that $C_n$ is generated by the automorphisms $\epsilon_{ij}$, where $1\leq i\neq j \leq n$, and the permutations $\alpha_i$ where $1\leq i \leq n-1$. Here $\alpha_i$ is defined as follows: $$\alpha_i= \left\{\begin{array}{l}
x_i \mapsto x_{i+1} \\
x_{i+1} \mapsto x_i,\hspace{0.5cm} i=1,2,\ldots,n-1\\
x_j \mapsto x_j, \hspace{0.8cm} j\neq i,i+1
\end{array}\right. $$

\begin{definition}
\text{[2]}
The group $IA(\mathbb{F}_n)$ is the group of the IA-automorphisms of the group $\mathbb{F}_n$. We introduce the representation $\rho:IA(\mathbb{F}_n) \mapsto GL_n(\mathbb{Z}[{t_1}^{\pm1},\ldots ,{t_n}^{\pm 1}])$ as follows:
\begin{align*}
&\epsilon_{ij} \mapsto \rho(\epsilon_{ij}):
\left\{\begin{array}{l}
e_i\rho(\epsilon_{ij})={t_j}^{-1}(t_i-1)e_j+{t_j}^{-1}e_i,\\
e_l\rho(\epsilon_{ij})=e_l$, \hspace{0.5cm} $l\neq i,\\
\end{array}\right.\\
\\
&\epsilon_{ijk} \mapsto \rho(\epsilon_{ijk}):
\left\{\begin{array}{l}
e_i\rho(\epsilon_{ijk})=e_i+t_i{t_j}^{-1}({t_k}^{-1}-1)e_j+t_i{t_k}^{-1}(1-{t_j}^{-1})e_k,\\
e_l\rho(\epsilon_{ijk})=e_l$, \hspace{0.5cm} $l\neq i.\\
\end{array}\right.
\end{align*}
\end{definition}
Here we consider the matrices $\rho(\epsilon_{ij})$ and $\rho(\epsilon_{ijk})$ as automorphisms of $W_n$, a free left $R-$module with basis $\{e_1,e_2,\ldots,e_n\}$, where $R= \mathbb{Z}[{t_1}^{\pm1},\ldots ,{t_n}^{\pm 1}]$. Throughout our work, we consider $GL_n(R)$ as acting from the left on column vectors and acting from the right on row vectors.

\section{The irreducibility of the representations $\hat{\rho}_G$} 

\begin{definition}
\text{[2]}
The representation $\hat{\rho}_G$ is defined by
$$\hat{\rho}_G:Cb_{n} \mapsto GL_n(\mathbb{Z}[t_1^{\pm1},\ldots,t_n^{\pm1}])$$
$$\epsilon_{ij} \mapsto
\left( \begin{array}{c|@{}c|c@{}}
   \begin{matrix}
     I_{i-1} 
   \end{matrix} 
      & 0 & 0 \\
      \hline
    0 &\hspace{0.2cm} \begin{matrix}
   		t_j^{-1} & 0 & \dots  & \dots& \dots&  t_j^{-1}(t_i-1) \\
   		0 & 1  & 0 & \dots & \dots & 0 \\
   		0 & 0 & 1 & 0 & \dots & 0\\
   		\vdots & \vdots & & \ddots & & \vdots \\
   		0 & \dots & \dots & 0 & 1 & 0 \\
   		0 &  0 &\dots & \dots & 0 & 1  \\
\end{matrix}  & 0  \\
\hline
0 & 0 & I_{n-j}
\end{array} \right) \hspace*{0.2cm} for \hspace*{0.2cm} i<j,$$ 

$$\epsilon_{ij} \mapsto
\left( \begin{array}{c|@{}c|c@{}}
   \begin{matrix}
     I_{j-1} 
   \end{matrix} 
      & 0 & 0 \\
      \hline
    0 &\hspace{0.2cm} \begin{matrix}
   		1 & 0 & \dots  & \dots& 0 &  0 \\
   		0 & 1  & 0 & \dots & \dots & 0 \\
   		\vdots & \vdots & & \ddots & & \vdots \\
   		0 & \dots & \dots & 1 & 0 & 0\\
   		0 & \dots & \dots & 0 & 1 & 0 \\
   		t_j^{-1}(t_i-1) &  0 &\dots & \dots & 0 & t_j^{-1}  \\
\end{matrix}  & 0  \\
\hline
0 & 0 & I_{n-i}
\end{array} \right) \hspace*{0.2cm} for \hspace*{0.2cm} j<i.$$ 
\end{definition}
Note that in the cases $i=1$ and $j=1$ we omit the first $i-1$ (respectively $j-1$) rows and $i-1$ (respectively $j-1$) columns. And in the cases $i=n$ and $j=n$ we omit the last $n-i$ (respectively $n-j$) rows and $n-i$ (respectively $n-j$) columns.

\begin{theorem}
The representation $\hat{\rho}_G$ is reducible.
\end{theorem}

\begin{proof}
Let $v=[t_1-1,t_2-1,\ldots,t_n-1]^T$, where $T$ is the transpose. We see that $\epsilon_{ij}(v)=v$ for all $1\leq i,j\leq n$, and so $v$ is fixed under the generators of  $\hat{\rho}_G$. Thus $\hat{\rho}_G$ is reducible.
\end{proof}
We specialize $t_1,\ldots, t_n$ to non-zero complex numbers. We want to find a composition factor of degree $n-1$ of $\hat{\rho}_G$. We may assume, in order not to get a one-dimensional representation, that not all $t_i$'s take on the value one. This means that there exists $t_j\neq 1$ for $1\leq j\leq n$. For the complex vector space $\mathbb{C}^n$ of dimension $n$, we consider the basis $S=\{e_1,\ldots, e_{j-1},\underbrace{e_{j+1},\ldots,e_n}_{n-j},v\}$, where $v=[t_1-1,t_2-1,\ldots,t_n-1]^T$.  It is clear that $S$ is a basis of $\mathbb{C}^n$ as $t_j\neq 1$. Now, to make calculations easier, we assume,  without loss of generality, that $j=n$ and so $t_n\neq 1$. In this way, the basis $S$ is $\{e_1,\ldots, e_{n-1}, v\}$.\\
\underline{For $i<j \neq n$}: \vspace{0.2cm} \\
$\epsilon_{ij}(e_1)=e_1$, $\epsilon_{ij}(e_2)=e_2$, \dots, $\epsilon_{ij}(e_{i-1})=e_{i-1}$, $\epsilon_{ij}(e_{i})=t_j^{-1}e_i$, $\epsilon_{ij}(e_{i+1})=e_{i+1}$, \dots, $\epsilon_{ij}(e_{j-1})=e_{j-1}$, $\epsilon_{ij}(e_j)=t_j^{-1}(t_i-1)e_i+e_j$, $\epsilon_{ij}(e_{j+1})=e_{j+1}$, \dots, $\epsilon_{ij}(e_{n-1})=e_{n-1}$, $\epsilon_{ij}(v)=v$.\\
\underline{For $j=n$}: \vspace{0.2cm} \\
$\epsilon_{in}(e_1)=e_1$, $\epsilon_{in}(e_2)=e_2$, \dots, $\epsilon_{in}(e_{i-1})=e_{i-1}$, $\epsilon_{in}(e_{i})=t_n^{-1}e_i$, $\epsilon_{in}(e_{i+1})=e_{i+1}$, \dots, $\epsilon_{in}(e_{n-1})=e_{n-1}$, $\epsilon_{in}(v)=v$.\\
\underline{For $j<i \neq n$}:  \vspace{0.2cm} \\
$\epsilon_{ij}(e_1)=e_1$, $\epsilon_{ij}(e_2)=e_2$, \dots, $\epsilon_{ij}(e_{j-1})=e_{j-1}$, $\epsilon_{ij}(e_{j})=e_{j}+t_j^{-1}(t_i-1)e_i$, $\epsilon_{ij}(e_{j+1})=e_{j+1}$, \dots, $\epsilon_{ij}(e_{i-1})=e_{i-1}$, $\epsilon_{ij}(e_{i})=t_j^{-1}e_i$, $\epsilon_{ij}(e_{i+1})=e_{i+1}$, \dots, $\epsilon_{ij}(e_{n-1})=e_{n-1}$, $\epsilon_{ij}(v)=v$.\\
\underline{For $i=n$}:  \vspace{0.2cm} \\
$\epsilon_{nj}(e_1)=e_1$, $\epsilon_{nj}(e_2)=e_2$, \dots, $\epsilon_{nj}(e_{j-1})=e_{j-1}$, $\epsilon_{nj}(e_j)=-t_j^{-1}(t_1-1)e_1-t_j^{-1}(t_2-1)e_2- ... -t_j^{-1}(t_{j-1}-1)e_{j-1}+t_j^{-1}e_j-t_j^{-1}(t_{j+1}-1)e_{j+1}- ... -t_j^{-1}(t_{n-1}-1)e_{n-1}+t_j^{-1}v$, $\epsilon_{nj}(e_{j+1})=e_{j+1}$, \dots, $\epsilon_{nj}(e_{n-1})=e_{n-1}$, $\epsilon_{nj}(v)=v.$

So, the representation $\hat{\rho}_G$, in the new basis $S$, becomes
$$\epsilon_{ij} \mapsto
\left( \begin{array}{c|@{}c|c@{}}
   \begin{matrix}
     I_{i-1} 
   \end{matrix} 
      & 0 & 0 \\
      \hline
    0 &\hspace{0.2cm} \begin{matrix}
   		t_j^{-1} & 0 & \dots  & \dots& \dots& 0 \\
   		0 & 1  & 0 & \dots & \dots & 0 \\
   		0 & 0 & 1 & 0 & \dots & 0\\
   		\vdots & \vdots & & \ddots & & \vdots \\
   		0 & \dots & \dots & 0 & 1 & 0 \\
   		 t_j^{-1}(t_i-1) &  0 &\dots & \dots & 0 & 1  \\
\end{matrix}  & 0  \\
\hline
0 & 0 & I_{n-j}
\end{array} \right) \hspace*{0.2cm} for \hspace*{0.2cm} i<j \neq n,$$ 

$$\epsilon_{in} \mapsto
\left( \begin{array}{c|@{}c|c@{}}
   \begin{matrix}
     I_{i-1} 
   \end{matrix} 
      & 0 & 0 \\
      \hline
    0 &\hspace{0.2cm} \begin{matrix}
   		t_n^{-1} \\
\end{matrix}  & 0  \\
\hline
0 & 0 & I_{n-i}
\end{array} \right),$$

 $$\epsilon_{ij} \mapsto
\left( \begin{array}{c|@{}c|c@{}}
   \begin{matrix}
     I_{j-1} 
   \end{matrix} 
      & 0 & 0 \\
      \hline
    0 &\hspace{0.2cm} \begin{matrix}
   		1 & 0 & \dots  & \dots& 0 &  t_j^{-1}(t_i-1) \\
   		0 & 1  & 0 & \dots & \dots & 0 \\
   		\vdots & \vdots & & \ddots & & \vdots \\
   		0 & \dots & \dots & 1 & 0 & 0\\
   		0 & \dots & \dots & 0 & 1 & 0 \\
   		0 &  0 &\dots & \dots & 0 & t_j^{-1}  \\
\end{matrix}  & 0  \\
\hline
0 & 0 & I_{n-i}
\end{array} \right) \hspace*{0.2cm} for \hspace*{0.2cm} j<i \neq n,$$ 

$$\epsilon_{nj} \mapsto
\left( \begin{array}{c|@{}c@{}}
   \begin{matrix}
     I_{j-1} 
   \end{matrix} 
      & 0 \\
      \hline
    \begin{matrix}
   		q_1 & \dots  & q_{j-1} \\
   		0 & 0  & 0 \\
   		\vdots  & \vdots  & \vdots \\
   		0 &  0 & 0  \\
   		0 &  0 & 0  \\

\end{matrix}  &\hspace{0.2cm} \begin{matrix}
   		t_j^{-1} & q_{j+1}  & \dots  &  q_{n-1} & t_j^{-1}\\
   		0 & 1  & 0 & \dots & 0 \\
   		\vdots & & \ddots  & &\vdots \\
   		0 & \dots & 0  & 1 & 0 \\
   		0 & \dots & \dots  & 0 & 1 \\
\end{matrix} \\
\end{array} \right),$$ 
where $q_k=-t_j^{-1}(t_k-1)$ for all $1 \leq k \neq j \leq n-1$.
\\

Now, we remove the last row and the last column to obtain the $n-1$ composition factor $\hat{\phi}_G$ given by the following generators
$$\epsilon_{ij} \mapsto
\left( \begin{array}{c|@{}c|c@{}}
   \begin{matrix}
     I_{i-1} 
   \end{matrix} 
      & 0 & 0 \\
      \hline
    0 &\hspace{0.2cm} \begin{matrix}
   		t_j^{-1} & 0 & \dots  & \dots& \dots& 0 \\
   		0 & 1  & 0 & \dots & \dots & 0 \\
   		0 & 0 & 1 & 0 & \dots & 0\\
   		\vdots & \vdots & & \ddots & & \vdots \\
   		0 & \dots & \dots & 0 & 1 & 0 \\
   		 t_j^{-1}(t_i-1) &  0 &\dots & \dots & 0 & 1  \\
\end{matrix}  & 0  \\
\hline
0 & 0 & I_{n-j-1}
\end{array} \right) \hspace*{0.2cm} for \hspace*{0.2cm} i<j \neq n,$$ 

$$\epsilon_{in} \mapsto
\left( \begin{array}{c|@{}c|c@{}}
   \begin{matrix}
     I_{i-1} 
   \end{matrix} 
      & 0 & 0 \\
      \hline
    0 &\hspace{0.2cm} \begin{matrix}
   		t_n^{-1} \\
\end{matrix}  & 0  \\
\hline
0 & 0 & I_{n-i-1}
\end{array} \right),$$

 $$\epsilon_{ij} \mapsto
\left( \begin{array}{c|@{}c|c@{}}
   \begin{matrix}
     I_{j-1} 
   \end{matrix} 
      & 0 & 0 \\
      \hline
    0 &\hspace{0.2cm} \begin{matrix}
   		1 & 0 & \dots  & \dots& 0 &  t_j^{-1}(t_i-1) \\
   		0 & 1  & 0 & \dots & \dots & 0 \\
   		\vdots & \vdots & & \ddots & & \vdots \\
   		0 & \dots & \dots & 1 & 0 & 0\\
   		0 & \dots & \dots & 0 & 1 & 0 \\
   		0 &  0 &\dots & \dots & 0 & t_j^{-1}  \\
\end{matrix}  & 0  \\
\hline
0 & 0 & I_{n-i-1}
\end{array} \right) \hspace*{0.2cm} for \hspace*{0.2cm} j<i \neq n,$$ 

$$\epsilon_{nj} \mapsto
\left( \begin{array}{c|@{}c@{}}
   \begin{matrix}
     I_{j-1} 
   \end{matrix} 
      & 0 \\
      \hline
    \begin{matrix}
   		q_1 & \dots  & q_{j-1} \\
   		0 & 0  & 0 \\
   		\vdots  & \vdots  & \vdots \\
   		0 &  0 & 0  \\
   		0 &  0 & 0  \\

\end{matrix}  &\hspace{0.2cm} \begin{matrix}
   		t_j^{-1} & q_{j+1}  & \dots  & \dots &  q_{n-1}\\
   		0 & 1  & 0 & \dots & 0 \\
   		\vdots & & \ddots  & &\vdots \\
   		0 & \dots & 0  & 1 & 0 \\
   		0 & \dots & \dots  & 0 & 1 \\
\end{matrix} \\
\end{array} \right),$$ 
where $q_k=-t_j^{-1}(t_k-1)$ for all $1 \leq k \neq j \leq n-1$.

\vspace{0.3cm}
Now, we consider the complex specialization of $\hat{\phi}_G$ by letting $t_i$ be non-zero complex numbers for all $1\leq i \leq n$ and $t_n\neq 1$.
\begin{theorem}
Let $0\neq t_1, \ldots, t_n \in \mathbb{C}$ and $t_n\neq 1$. The representation $\hat{\phi}_G(t_1, \ldots, t_n): Cb_{n} \mapsto GL_{n-1}(\mathbb{C})$ is irreducible if and only if $t_i \neq 1$ for all $1\leq i< n$.
\end{theorem}
\begin{proof}
For the necessary condition, suppose that there exists $1\leq s < n$ such that $t_s=1$.\\
Since $s\neq n$, it follows that:
\begin{itemize}
\item for $i\neq n$ and $1 \leq j \leq n$, $\hat{\phi}_G(\epsilon_{ij})(e_s)=e_s$ for $s \neq i$. Also $\hat{\phi}_G(\epsilon_{ij})(e_s)=t_j^{-1}e_s$ for $s=i$,
\item for $i=n$ and $1 \leq j \leq n$, $\hat{\phi}_G(\epsilon_{nj})(e_s)=e_s$.
\end{itemize}
So $<e_s>$ is an invariant subspace of $\mathbb{C}^{n-1}$ under $\hat{\phi}_G$, hence $\hat{\phi}_G$ is reducible.

For the sufficient condition, suppose that $t_i \neq 1$ for all $1 \leq i < n$.\\
Let $S$ be a non zero invariant subspace of $\mathbb{C}^{n-1}$ under $\hat{\phi}_G$, and let $x=(x_1,x_2,\ldots,x_{n-1})$ be a non zero vector in $S$. Fix $1 \leq r \leq n-1$.
\begin{itemize}
\item If $x_r \neq 0$, then $\hat{\phi}_G(\epsilon_{rn})(x)-x=(t_n^{-1}-1)x_re_r\in S$. But $(t_n^{-1}-1)x_r\neq 0$, so $e_r \in S$.
\item If $x_r=0$, then pick $1\leq j\neq r \leq n-1$ such that $x_j \neq 0$. We have $\hat{\phi}_G(\epsilon_{rj})(x)-x=-t_j^{-1}(t_r-1)x_je_r \in S$ with $-t_j^{-1}(t_r-1)x_j\neq 0$, so $e_r \in S.$
\end{itemize}
Hence $e_r \in S$ for any $1 \leq r \leq n-1$, and so $S=\mathbb{C}^{n-1}$. Thus  $\hat{\phi}_G$ is irreducible. 
\end{proof}

\section{The irreducibility of the representations $\hat{\rho}_B$} 

The group $C_n$ is a semidirect product $C_n= Cb_n \leftthreetimes S_n$. We let $t_1=t_2= \ldots =t_n$ in the matrix $\hat{\rho}_G(\epsilon_{ij})$ in order to get a matrix $\hat{\rho}_B(\epsilon_{ij})$. To each automorphism in $S_n$, we assign the matrix of the corresponding permutation of the elements of the base $W_n$. In this way we obtain the representation $\hat{\rho}_B:C_n \mapsto GL_n(\mathbb{Z}[t^{\pm1}])$ (see [2]).
\begin{definition}
\text{[2]}
The representation $\hat{\rho}_B$ is defined as follows
$$\hat{\rho}_B:C_n \mapsto GL_n(\mathbb{Z}[t^{\pm1}])$$
$$\epsilon_{ij} \mapsto
\left( \begin{array}{c|@{}c|c@{}}
   \begin{matrix}
     I_{i-1} 
   \end{matrix} 
      & 0 & 0 \\
      \hline
    0 &\hspace{0.2cm} \begin{matrix}
   		t^{-1} & 0 & \dots  & \dots& \dots&  1-t^{-1} \\
   		0 & 1  & 0 & \dots & \dots & 0 \\
   		0 & 0 & 1 & 0 & \dots & 0\\
   		\vdots & \vdots & & \ddots & & \vdots \\
   		0 & \dots & \dots & 0 & 1 & 0 \\
   		0 &  0 &\dots & \dots & 0 & 1  \\
\end{matrix}  & 0  \\
\hline
0 & 0 & I_{n-j}
\end{array} \right) \hspace*{0.2cm} for \hspace*{0.2cm} i<j,$$ 

$$\epsilon_{ij} \mapsto
\left( \begin{array}{c|@{}c|c@{}}
   \begin{matrix}
     I_{j-1} 
   \end{matrix} 
      & 0 & 0 \\
      \hline
    0 &\hspace{0.2cm} \begin{matrix}
   		1 & 0 & \dots  & \dots& 0 &  0 \\
   		0 & 1  & 0 & \dots & \dots & 0 \\
   		\vdots & \vdots & & \ddots & & \vdots \\
   		0 & \dots & 0 & 1 & 0 & 0\\
   		0 & \dots & \dots & 0 & 1 & 0 \\
   		1-t^{-1} &  0 &\dots & \dots & 0 & t^{-1}  \\
\end{matrix}  & 0  \\
\hline
0 & 0 & I_{n-i}
\end{array} \right) \hspace*{0.2cm} for \hspace*{0.2cm} j<i,$$ 

$$\alpha_i \mapsto
\left( \begin{array}{c|@{}c|c@{}}
   \begin{matrix}
     I_{i-1} 
   \end{matrix} 
      & 0 & 0 \\
      \hline
    0 &\hspace{0.2cm} \begin{matrix}
   		0 & 1\\
   		1 & 0\\
\end{matrix}  & 0  \\
\hline
0 & 0 & I_{n-i-1}
\end{array} \right) \hspace*{0.2cm} for \hspace*{0.2cm} 1\leq i \leq n-1.$$ 

\end{definition}

Note that for $\epsilon_{ij}$, in the cases $i=1$ and $j=1$, we omit the first $i-1$ (respectively $j-1$) rows and $i-1$ (respectively $j-1$) columns. And in the cases $i=n$ and $j=n$, we omit the last $n-i$ (respectively $n-j$) rows and $n-i$ (respectively $n-j$) columns.

For $\alpha_i$'s, in the case $i=1$, we omit the first $i-1$ rows and $i-1$ columns. And 
in the case $i=n-1$, we omit the last $n-i-1$ rows and $n-i-1$ columns.
\begin{theorem}
The representation $\hat{\rho}_B$ is reducible.
\end{theorem}
\begin{proof}
Let $v=[1,1,...,1]^T$, where $T$ is the transpose. We see that $\epsilon_{ij}(v)=v$ for all $1\leq i,j\leq n$, and $\alpha_i(v)=v$ for all $1\leq i \leq n-1$. So $v$ is fixed under the generators of  $\hat{\rho}_B$. Thus $\hat{\rho}_B$ is reducible.
\end{proof}

We now specialize $t$ to a non-zero complex number and we find a composition factor of degree $n-1$ of $\hat{\rho}_B$. For $\mathbb{C}^n$, consider the basis $S=\{e_1,e_2,...,e_{n-1},v\}$, where $v=[1,1,...,1]^T$.\\
Consider first the action of $\epsilon_{ij}$'s on the basis $S$.\\
\underline{For $i<j \neq n$}: \vspace{0.2cm} \\
$\epsilon_{ij}(e_1)=e_1$, $\epsilon_{ij}(e_2)=e_2$, \dots, $\epsilon_{ij}(e_{i-1})=e_{i-1}$, $\epsilon_{ij}(e_{i})=t^{-1}e_i$, $\epsilon_{ij}(e_{i+1})=e_{i+1}$, \dots, $\epsilon_{ij}(e_{j-1})=e_{j-1}$, $\epsilon_{ij}(e_j)=(1-t^{-1})e_i+e_j$, $\epsilon_{ij}(e_{j+1})=e_{j+1}$, \dots, $\epsilon_{ij}(e_{n-1})=e_{n-1}$, $\epsilon_{ij}(v)=v.$\\
\underline{For $j=n$}: \vspace{0.2cm} \\
$\epsilon_{in}(e_1)=e_1$, $\epsilon_{in}(e_2)=e_2$, \dots, $\epsilon_{in}(e_{i-1})=e_{i-1}$, $\epsilon_{in}(e_{i})=t^{-1}e_i$, $\epsilon_{in}(e_{i+1})=e_{i+1}$, \dots, $\epsilon_{in}(e_{n-1})=e_{n-1}$, \dots, $\epsilon_{in}(v)=v.$\\
\underline{For $j<i \neq n$}:  \vspace{0.2cm} \\
$\epsilon_{ij}(e_1)=e_1$, $\epsilon_{ij}(e_2)=e_2$, \dots, $\epsilon_{ij}(e_{j-1})=e_{j-1}$, $\epsilon_{ij}(e_{j})=e_{j}+(1-t^{-1})e_i$, $\epsilon_{ij}(e_{j+1})=e_{j+1}$, \dots, $\epsilon_{ij}(e_{i-1})=e_{i-1}$, $\epsilon_{ij}(e_{i})=t^{-1}e_i$, $\epsilon_{ij}(e_{i+1})=e_{i+1}$, \dots, $\epsilon_{ij}(e_{n-1})=e_{n-1}$, $\epsilon_{ij}(v)=v.$\\
\underline{For $i=n$}:  \vspace{0.2cm} \\
$\epsilon_{nj}(e_1)=e_1$, $\epsilon_{nj}(e_2)=e_2$, \dots, $\epsilon_{nj}(e_{j-1})=e_{j-1}$, $\epsilon_{nj}(e_j)=(t^{-1}-1)e_1+(t^{-1}-1)e_2+ ... +(t^{-1}-1)e_{j-1}+t^{-1}e_j+(t^{-1}-1)e_{j+1}+ ... +(t^{-1}-1)e_{n-1}+t^{-1}v$, $\epsilon_{nj}(e_{j+1})=e_{j+1}$, \dots, $\epsilon_{nj}(e_{n-1})=e_{n-1}$, $\epsilon_{nj}(v)=v$.
\\

Now, we consider the action of $\alpha_i$'s on the basis $S$.\\
\underline{For $i\neq n-1$}:  \vspace{0.2cm} \\
$\alpha_i(e_1)=e_1$, $\alpha_i(e_2)=e_2$, \dots, $\alpha_i(e_{i-1})=e_{i-1}$, $\alpha_i(e_i)=e_{i+1}$, $\alpha_i(e_{i+1})=e_i$, $\alpha_i(e_{i+2})=e_{i+2}$, \dots, $\alpha_i(e_{n-1})=e_{n-1}$, $\alpha_i(v)=v.$\\
\underline{For $i=n-1$}:  \vspace{0.2cm} \\
$\alpha_{n-1}(e_1)=e_1$, $\alpha_{n-1}(e_2)=e_2$, \dots, $\alpha_{n-1}(e_{n-2})=e_{n-2}$, $\alpha_{n-1}(e_{n-1})=-e_1-e_2-...-e_{n-1}+v$, $\alpha_{n-1}(v)=v.$

\vspace{0.3cm}
So, the representation $\hat{\rho}_B$ in the new basis $S$ becomes as follows
$$\epsilon_{ij} \mapsto
\left( \begin{array}{c|@{}c|c@{}}
   \begin{matrix}
     I_{i-1} 
   \end{matrix} 
      & 0 & 0 \\
      \hline
    0 &\hspace{0.2cm} \begin{matrix}
   		t^{-1} & 0 & \dots  & \dots& \dots& 0 \\
   		0 & 1  & 0 & \dots & \dots & 0 \\
   		0 & 0 & 1 & 0 & \dots & 0\\
   		\vdots & \vdots & & \ddots & & \vdots \\
   		0 & \dots & \dots & 0 & 1 & 0 \\
   		 1-t^{-1} &  0 &\dots & \dots & 0 & 1  \\
\end{matrix}  & 0  \\
\hline
0 & 0 & I_{n-j}
\end{array} \right) \hspace*{0.2cm} for \hspace*{0.2cm} i<j \neq n,$$ 

$$\epsilon_{in} \mapsto
\left( \begin{array}{c|@{}c|c@{}}
   \begin{matrix}
     I_{i-1} 
   \end{matrix} 
      & 0 & 0 \\
      \hline
    0 &\hspace{0.2cm} \begin{matrix}
   		t^{-1} \\
\end{matrix}  & 0  \\
\hline
0 & 0 & I_{n-i}
\end{array} \right),$$

 $$\epsilon_{ij} \mapsto
\left( \begin{array}{c|@{}c|c@{}}
   \begin{matrix}
     I_{j-1} 
   \end{matrix} 
      & 0 & 0 \\
      \hline
    0 &\hspace{0.2cm} \begin{matrix}
   		1 & 0 & \dots  & \dots& 0 &  1-t^{-1} \\
   		0 & 1  & 0 & \dots & \dots & 0 \\
   		\vdots & \vdots & & \ddots & & \vdots \\
   		0 & \dots & \dots & 1 & 0 & 0\\
   		0 & \dots & \dots & 0 & 1 & 0 \\
   		0 &  0 &\dots & \dots & 0 & t^{-1}  \\
\end{matrix}  & 0  \\
\hline
0 & 0 & I_{n-i}
\end{array} \right) \hspace*{0.2cm} for \hspace*{0.2cm} j<i \neq n,$$ 

$$\epsilon_{nj} \mapsto
\left( \begin{array}{c|@{}c@{}}
   \begin{matrix}
     I_{j-1} 
   \end{matrix} 
      & 0 \\
      \hline
    \begin{matrix}
   		t^{-1}-1 & \dots  & t^{-1}-1 \\
   		0 & 0  & 0 \\
   		\vdots  & \vdots  & \vdots \\
   		0 &  0 & 0  \\
   		0 &  0 & 0  \\

\end{matrix}  &\hspace{0.2cm} \begin{matrix}
   		t^{-1} & t^{-1}-1  & \dots  &  t^{-1}-1 & t^{-1}\\
   		0 & 1  & 0 & \dots & 0 \\
   		\vdots & & \ddots  & &\vdots \\
   		0 & \dots & 0  & 1 & 0 \\
   		0 & \dots & \dots  & 0 & 1 \\
\end{matrix} \\
\end{array} \right),$$ 

$$\alpha_i \mapsto
\left( \begin{array}{c|@{}c|c@{}}
   \begin{matrix}
     I_{i-1} 
   \end{matrix} 
      & 0 & 0 \\
      \hline
    0 &\hspace{0.2cm} \begin{matrix}
   		0 & 1\\
   		1 & 0\\
\end{matrix}  & 0  \\
\hline
0 & 0 & I_{n-i-1}
\end{array} \right) \hspace*{0.2cm} for \hspace*{0.2cm} 1\leq i< n-1,$$ 

$$\alpha_{n-1} \mapsto
\left( \begin{array}{c|@{}c@{}}
   \begin{matrix}
     I_{n-2} 
   \end{matrix} 
      & 0 \\
      \hline
    \begin{matrix}
   		-1 & -1 & \dots & -1 \\
   		0 & 0  & 0 & 0\\

\end{matrix}  &\hspace{0.2cm} \begin{matrix}
   	-1& 1 \\
   		0 & 1\\
\end{matrix} \\
\end{array} \right).$$ 
\vspace{0.3cm}

Now, we remove the last row and the last column to obtain the $n-1$ composition factor $\hat{\phi}_B$, which is given by
$$\epsilon_{ij} \mapsto
\left( \begin{array}{c|@{}c|c@{}}
   \begin{matrix}
     I_{i-1} 
   \end{matrix} 
      & 0 & 0 \\
      \hline
    0 &\hspace{0.2cm} \begin{matrix}
   		t^{-1} & 0 & \dots  & \dots& \dots& 0 \\
   		0 & 1  & 0 & \dots & \dots & 0 \\
   		0 & 0 & 1 & 0 & \dots & 0\\
   		\vdots & \vdots & & \ddots & & \vdots \\
   		0 & \dots & \dots & \dots & 1 & 0 \\
   		 1-t^{-1} &  0 &\dots & \dots & 0 & 1  \\
\end{matrix}  & 0  \\
\hline
0 & 0 & I_{n-j-1}
\end{array} \right) \hspace*{0.2cm} for \hspace*{0.2cm} i<j \neq n,$$ 

$$\epsilon_{in} \mapsto
\left( \begin{array}{c|@{}c|c@{}}
   \begin{matrix}
     I_{i-1} 
   \end{matrix} 
      & 0 & 0 \\
      \hline
    0 &\hspace{0.2cm} \begin{matrix}
   		t^{-1} \\
\end{matrix}  & 0  \\
\hline
0 & 0 & I_{n-i-1}
\end{array} \right),$$

 $$\epsilon_{ij} \mapsto
\left( \begin{array}{c|@{}c|c@{}}
   \begin{matrix}
     I_{j-1} 
   \end{matrix} 
      & 0 & 0 \\
      \hline
    0 &\hspace{0.2cm} \begin{matrix}
   		1 & 0 & \dots  & \dots& 0 &  1-t^{-1} \\
   		0 & 1  & 0 & \dots & \dots & 0 \\
   		\vdots & \vdots & & \ddots & & \vdots \\
   		0 & \dots & \dots & 1 & 0 & 0\\
   		0 & \dots & \dots & 0 & 1 & 0 \\
   		0 &  0 &\dots & \dots & 0 & t^{-1}  \\
\end{matrix}  & 0  \\
\hline
0 & 0 & I_{n-i-1}
\end{array} \right) \hspace*{0.2cm} for \hspace*{0.2cm} j<i \neq n,$$ 

$$\epsilon_{nj} \mapsto
\left( \begin{array}{c|@{}c@{}}
   \begin{matrix}
     I_{j-1} 
   \end{matrix} 
      & 0 \\
      \hline
    \begin{matrix}
   		t^{-1}-1 & \dots  & t^{-1}-1 \\
   		0 & 0  & 0 \\
   		\vdots  & \vdots  & \vdots \\
   		0 &  0 & 0  \\
   		0 &  0 & 0  \\

\end{matrix}  &\hspace{0.2cm} \begin{matrix}
   		t^{-1} & t^{-1}-1  & \dots  & \dots & t^{-1}-1\\
   		0 & 1  & 0 & \dots & 0 \\
   		\vdots & & \ddots  & &\vdots \\
   		0 & \dots & 0  & 1 & 0 \\
   		0 & \dots & \dots  & 0 & 1 \\
\end{matrix} \\
\end{array} \right),$$ 

$$\alpha_i \mapsto
\left( \begin{array}{c|@{}c|c@{}}
   \begin{matrix}
     I_{i-1} 
   \end{matrix} 
      & 0 & 0 \\
      \hline
    0 &\hspace{0.2cm} \begin{matrix}
   		0 & 1\\
   		1 & 0\\
\end{matrix}  & 0  \\
\hline
0 & 0 & I_{n-i-2}
\end{array} \right) \hspace*{0.2cm} for \hspace*{0.2cm} 1\leq i< n-1,$$ 

$$\alpha_{n-1} \mapsto
\left( \begin{array}{c|@{}c@{}}
   \begin{matrix}
     I_{n-2} 
   \end{matrix} 
      & 0 \\
      \hline
    \begin{matrix}
   		-1 & -1 & \dots & -1 \\

\end{matrix}  &\hspace{0.2cm} \begin{matrix}
   	-1 \\
\end{matrix} \\
\end{array} \right).$$
\\

Now, we consider the complex specialization of $\hat{\phi}_B$ by letting $t$ be a non-zero complex number.
\begin{theorem}
Let $0\neq t \in \mathbb{C}$. The representation $\hat{\phi}_B(t): C_{n} \mapsto GL_{n-1}(\mathbb{C})$ is irreducible.
\end{theorem}
\begin{proof}
Since the restriction of the representation $\hat{\phi}_B(t)$ to the subgroup $S_n$ inside $C_n$ is irreducible, it follows that $\hat{\phi}_B(t)$ itself is irreducible.
\end{proof}

\section{The tensor product of complex irreducible representations of $Cb_3$}   

In this section, we set $n=3$ and we consider the complex irreducible specialization $\hat{\phi}_G$, which is given by
$$\epsilon_{12} \mapsto
\left( \begin{array}{c@{}c@{}}
\begin{matrix}
 t_2^{-1} & 0 \\
 t_2^{-1}(t_1-1) & 1 \\
\end{matrix} 
\end{array} \right), \epsilon_{21} \mapsto
\left( \begin{array}{c@{}c@{}}
\begin{matrix}
 1 & t_1^{-1}(t_2-1) \\
 0 & t_1^{-1} \\
\end{matrix} 
\end{array} \right), \epsilon_{13} \mapsto
\left( \begin{array}{c@{}c@{}}
\begin{matrix}
 t_3^{-1} & 0 \\
 0 & 1 \\
\end{matrix} 
\end{array} \right),$$

$$ \epsilon_{31} \mapsto
\left( \begin{array}{c@{}c@{}}
\begin{matrix}
 t_1^{-1} & -t_1^{-1}(t_2-1) \\
 0 & 1 \\
\end{matrix} 
\end{array} \right), \epsilon_{32} \mapsto
\left( \begin{array}{c@{}c@{}}
\begin{matrix}
 1 & 0 \\
 -t_2^{-1}(t_1-1) & t_2^{-1} \\
\end{matrix} 
\end{array} \right), \epsilon_{23} \mapsto
\left( \begin{array}{c@{}c@{}}
\begin{matrix}
 1 & 0 \\
 0 & t_3^{-1} \\
\end{matrix} 
\end{array} \right).$$\\
Now, we consider the generators of $\hat{\phi}_{G}(t_1,t_2,t_3) \otimes \hat{\phi}_{G}(m_1,m_2,m_3)$. For simplicity, we write $(\hat{\phi}_{G}(t_1,t_2,t_3) \otimes \hat{\phi}_{G}(m_1,m_2,m_3))(\epsilon_{ij})=\epsilon_{ij}$.
$$\epsilon_{12} \mapsto
\left( \begin{array}{c@{}c@{}}
\begin{matrix}
 t_2^{-1}m_2^{-1} & 0  & 0 & 0\\
 t^{-1}_2(t_1-1)m_2^{-1} & m_2^{-1} & 0 & 0 \\
 t^{-1}_2m_2^{-1}(m_1-1) & 0 & t_2^{-1} & 0 \\
 t_2^{-1}(t_1-1)m_2^{-1}(m_1-1) & m_2^{-1}(m_1-1) & t_2^{-1}(t_1-1) & 1 \\
\end{matrix} 
\end{array} \right),$$

$$\epsilon_{21} \mapsto
\left( \begin{array}{c@{}c@{}}
\begin{matrix}
 1 & t^{-1}_1(t_2-1)  & m^{-1}_1(m_2-1) & t_1^{-1}(t_2-1)m_1^{-1}(m_2-1)\\
 0 & t_1^{-1} & 0 & t_1^{-1}m_1^{-1}(m_2-1) \\
 0 & 0 & m_1^{-1} & t_1^{-1}m_1^{-1}(t_2-1) \\
 0 & 0 & 0 & t_1^{-1}m_1^{-1} \\
\end{matrix} 
\end{array} \right),$$

$$\epsilon_{13} \mapsto
\left( \begin{array}{c@{}c@{}}
\begin{matrix}
 t_3^{-1}m_3^{-1} & 0  & 0 & 0\\
 0 & m_3^{-1} & 0 & 0 \\
 0 & 0 & t_3^{-1} & 0 \\
 0 & 0 & 0 & 1 \\
\end{matrix} 
\end{array} \right),$$

$$\epsilon_{31} \mapsto
\left( \begin{array}{c@{}c@{}}
\begin{matrix}
 t_1^{-1}m_1^{-1} & -t^{-1}_1(t_2-1)m^{-1}_1 & -t^{-1}_1m_1^{-1}(m_2-1) & t_1^{-1}(t_2-1)m_1^{-1}(m_2-1)\\
 0 & m_1^{-1} & 0 & -m_1^{-1}(m_2-1) \\
 0 & 0 & t_1^{-1} & -t_1^{-1}(t_2-1) \\
 0 & 0 & 0 & 1 \\
\end{matrix} 
\end{array} \right),$$

$$\epsilon_{23} \mapsto
\left( \begin{array}{c@{}c@{}}
\begin{matrix}
 1 & 0 & 0 & 0\\
 0 & t_3^{-1} & 0 & 0 \\
 0 & 0 & m_3^{-1} & 0 \\
 0 & 0 & 0 & t_3^{-1}m_3^{-1}\\
\end{matrix} 
\end{array} \right),$$

$$\epsilon_{32} \mapsto
\left( \begin{array}{c@{}c@{}}
\begin{matrix}
1 & 0 & 0 & 0 \\
-t_2^{-1}(t_1-1) & t_2^{-1} & 0 & 0 \\
-m_2^{-1}(m_1-1) & 0 & m_2^{-1} & 0 \\
t_2^{-1}(t_1-1)m_2^{-1}(m_1-1) & -t^{-1}_2m_2^{-1}(m_1-1) & -t_2^{-1}(t_1-1)m_2^{-1} & t_2^{-1}m_2^{-1} \\
\end{matrix} 
\end{array} \right).$$

By Theorem 4, we assume that $t_i\neq 1$ for all $1 \leq i\leq 3$ and $m_i\neq 1$ for all $1 \leq i\leq 3$.

\begin{theorem}
For $n=3$, the tensor product representation  $\hat{\phi}_G(t_1,t_2,t_3) \otimes \hat{\phi}_G(m_1,m_2,m_3)$ is irreducible if and only if $(t_1,t_2,t_3)$ and $(m_1,m_2,m_3)$ are distinct vectors.
\end{theorem}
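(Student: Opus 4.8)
The plan is to prove both directions by working directly with the six explicit $4\times4$ matrices listed above, using throughout that $t_i\neq1$ and $m_i\neq1$ (otherwise one tensor factor, and hence the product, is already reducible by Theorem 4); in particular every off-diagonal entry $t_i-1,m_i-1$ is nonzero. I write $e_1,e_2,e_3,e_4$ for the standard basis of $\mathbb{C}^4$ in the ordering used for the matrices above.

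For the necessary condition I argue by contraposition. If $(t_1,t_2,t_3)=(m_1,m_2,m_3)$ the two factors coincide, the representation is $\pi\otimes\pi$ with $\pi=\hat\phi_G(t_1,t_2,t_3)$, and the antisymmetric line $\wedge^2\pi$ is invariant with the group acting on it through $\det\pi$. In the present ordering this line is $\langle e_2-e_3\rangle$, and one checks directly that $\epsilon_{ij}(e_2-e_3)$ is a scalar multiple of $e_2-e_3$ for each of the six generators (for instance $\epsilon_{12}(e_2-e_3)=t_2^{-1}(e_2-e_3)$, $\epsilon_{13}(e_2-e_3)=t_3^{-1}(e_2-e_3)$). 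Thus $\langle e_2-e_3\rangle$ is a proper invariant subspace and the tensor product is reducible.

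For the sufficient condition, assume the vectors are distinct, let $S\neq0$ be invariant, and aim at $S=\mathbb{C}^4$. The engine is the pair of diagonal generators $\epsilon_{13}=\mathrm{diag}(t_3^{-1}m_3^{-1},m_3^{-1},t_3^{-1},1)$ and $\epsilon_{23}=\mathrm{diag}(1,t_3^{-1},m_3^{-1},t_3^{-1}m_3^{-1})$. Since $t_3,m_3\neq1$, the operator $(\epsilon_{13}-I)(\epsilon_{23}-I)$ is the nonzero scalar $(t_3^{-1}-1)(m_3^{-1}-1)$ on $\langle e_2,e_3\rangle$ and zero on $e_1,e_4$, so the $\langle e_2,e_3\rangle$-part and the $\langle e_1,e_4\rangle$-part of any $x\in S$ again lie in $S$. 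When $t_3\neq m_3$ suitable products $(\epsilon_{13}-\lambda I)$ further project onto the single lines $\langle e_2\rangle$ and $\langle e_3\rangle$. The argument now splits on $t_3$ versus $m_3$. If $t_3\neq m_3$, from $0\neq x\in S$ I first produce a nonzero vector of $\langle e_2,e_3\rangle\cap S$ (if the $\langle e_2,e_3\rangle$-part of $x$ vanishes, apply $\epsilon_{12}$ or $\epsilon_{21}$ to its $\langle e_1,e_4\rangle$-part, whose $\langle e_2,e_3\rangle$-component is proportional to $x_1(t_1-1,m_1-1)$ or $x_4(m_2-1,t_2-1)$ and hence nonzero) and then project onto $\langle e_2\rangle$ to get $e_2\in S$. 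If instead $t_3=m_3$, distinctness forces $(t_1,t_2)\neq(m_1,m_2)$, and I use the four $t_3$-free generators: applying $\epsilon_{12},\epsilon_{32}$ (respectively $\epsilon_{21},\epsilon_{31}$) to a vector $\alpha e_2+\beta e_3\in S$ and projecting onto $\langle e_1,e_4\rangle$ yields the pure multiples $[\alpha m_2^{-1}(m_1-1)+\beta t_2^{-1}(t_1-1)]e_4$ and $[\alpha(m_1-1)+\beta(t_1-1)]e_4$ (respectively the analogous $e_1$-multiples). These two forms cannot vanish simultaneously for $(\alpha,\beta)\neq0$ unless $t_2=m_2$ (respectively $t_1=m_1$); since $t_1\neq m_1$ or $t_2\neq m_2$, one choice delivers $e_1$ or $e_4$ in $S$.

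It remains to bootstrap from a single basis vector. The two-term identities $\epsilon_{31}e_2=m_1^{-1}e_2-t_1^{-1}(t_2-1)m_1^{-1}e_1$ and $\epsilon_{32}e_2=t_2^{-1}e_2-t_2^{-1}m_2^{-1}(m_1-1)e_4$ give $e_2\Rightarrow e_1,e_4$, after which $\epsilon_{12}e_1-t_2^{-1}m_2^{-1}e_1=t_2^{-1}m_2^{-1}[(t_1-1)e_2+(m_1-1)e_3+(t_1-1)(m_1-1)e_4]$ yields $e_3$; symmetrically $e_3\Rightarrow$ everything. These steps are unconditional, so the case $t_3\neq m_3$ (where I reached $e_2$) is finished. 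In the case $t_3=m_3$ I may instead have reached $e_1$ or $e_4$: from $e_4$, $\epsilon_{21}$ places $(m_2-1)e_2+(t_2-1)e_3$ in $S$, and a second independent vector of $\langle e_2,e_3\rangle$ is produced by $\epsilon_{12}$ (independent iff $t_2\neq m_2$) or by $\epsilon_{31}$ (independent iff $t_1\neq m_1$), the relevant $2\times2$ determinant being proportional to $m_2^{-1}-t_2^{-1}$ or to $m_1^{-1}-t_1^{-1}$; distinctness guarantees one of these, so $e_2,e_3\in S$ and we fall back on the unconditional bootstrap (the case of $e_1$ is symmetric).

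The main obstacle is the fully degenerate case $t_3=m_3=-1$, where $\epsilon_{13}$ and $\epsilon_{23}$ see only the coarse decomposition $\langle e_1,e_4\rangle\oplus\langle e_2,e_3\rangle$ and offer no finer separation of the basis. It is exactly here that the hypothesis must be fed in through the $t_3$-independent generators, and the whole sufficiency proof hinges on the nonvanishing of the linear forms above — equivalently on the determinants proportional to $m_2^{-1}-t_2^{-1}$ and $m_1^{-1}-t_1^{-1}$ — which is the precise mechanism converting $(t_1,t_2)\neq(m_1,m_2)$ into the extraction of a genuine coordinate vector.
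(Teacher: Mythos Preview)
Your proof is correct and takes a genuinely different route from the paper's.

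For the necessary direction, the paper exhibits the three-dimensional symmetric subspace $\langle e_1,\,e_2+e_3,\,e_4\rangle$ and checks by hand that all six generators preserve it. You instead exhibit the one-dimensional antisymmetric line $\langle e_2-e_3\rangle\cong\wedge^2\pi$, which is slicker and comes with a conceptual explanation (the group acts on it by $\det\pi$).

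For the sufficient direction the contrast is sharper. The paper relies on the single diagonal generator $\epsilon_{13}$, splits into four main cases according to whether $t_3=m_3$ and whether $t_3m_3=1$, and in each case enumerates every possible invariant subspace compatible with the eigenspace structure of $\epsilon_{13}$, eliminating each by exhibiting a generator that does not preserve it. Your argument instead uses \emph{both} diagonal generators $\epsilon_{13}$ and $\epsilon_{23}$: the product $(\epsilon_{13}-I)(\epsilon_{23}-I)$ is a universal projection onto $\langle e_2,e_3\rangle$ (valid even when $t_3m_3=1$), and from there you run a constructive cascade that manufactures a standard basis vector inside $S$ and then bootstraps to all of $\mathbb{C}^4$. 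This collapses the paper's four cases with their many subcases into essentially two ($t_3\neq m_3$ versus $t_3=m_3$), handled by the same mechanism. The trade-off is that the paper's enumeration, while long, is entirely mechanical, whereas your argument requires the observation that the $2\times2$ determinants governing the linear forms are proportional to $m_1^{-1}-t_1^{-1}$ and $m_2^{-1}-t_2^{-1}$, which is exactly where the hypothesis $(t_1,t_2)\neq(m_1,m_2)$ enters when $t_3=m_3$.

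Two small expository points: in the $t_3\neq m_3$ paragraph you should say ``project onto $\langle e_2\rangle$ or $\langle e_3\rangle$'' rather than just $\langle e_2\rangle$, and in the $t_3=m_3$ paragraph it is worth saying explicitly that a nonzero element of $S\cap\langle e_2,e_3\rangle$ is obtained by the same mechanism as before (projection, preceded if necessary by $\epsilon_{12}$ or $\epsilon_{21}$). Neither affects correctness.
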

\begin{proof} For the necessary condition, suppose that $(t_1,t_2,t_3)$ and $(m_1,m_2,m_3)$ are equal vectors. Consider $S_1=<e_1,e_2+e_3,e_4>$.\\
$\epsilon_{12}(e_1)=t_2^{-2}e_1+t_2^{-2}(t_1-1)(e_2+e_3)+t_2^{-2}(t_1-1)^2e_4 \in S_1 \\
\epsilon_{21}(e_1)=e_1 \in S_1 \\
\epsilon_{13}(e_1)=t^{-2}_3e_1 \in S_1\\
\epsilon_{31}(e_1)=t_1^{-2}e_1 \in S_1 \\
\epsilon_{23}(e_1)=e_1 \in S_1\\
\epsilon_{32}(e_1)=e_1-t_2^{-1}(t_1-1)(e_2+e_3)+(t_1-1)^2t_2^{-2}e_4 \in S_1\\
\epsilon_{12}(e_2+e_3)=t_2^{-1}(e_2+e_3)+2t_2^{-1}(t_1-1)e_4 \in S_1\\
\epsilon_{21}(e_2+e_3)=2t_1^{-1}(t_2-1)e_1+t_1^{-1}(e_2+e_3) \in S_1\\ 
\epsilon_{13}(e_2+e_3)=t_3^{-1}(e_2+e_3) \in S_1\\
\epsilon_{31}(e_2+e_3)=-2t_1^{-1}(t_2-1)e_1+t_1^{-1}(e_2+e_3) \in S_1\\
\epsilon_{23}(e_2+e_3)=t_3^{-1}(e_2+e_3) \in S_1\\
\epsilon_{32}(e_2+e_3)=t_2^{-1}(e_2+e_3)-t^{-2}_2(t_1-1)^2e_4 \in S_1\\ 
\epsilon_{12}(e_4)=e_4 \in S_1\\ 
\epsilon_{21}(e_4)=t^{-2}_1(t_2-1)^2e_1+t_1^{-2}(t_2-1)(e_2+e_3)+t_1^{-2}e_4 \in S_1\\ 
\epsilon_{13}(e_4)=e_4 \in S_1\\ 
\epsilon_{31}(e_4)=t_1^{-2}(t_2-1)^2e_1-t_1^{-1}(t_2-1)(e_2+e_3)+e_4 \in S_1\\ 
\epsilon_{23}(e_4)=t_3^{-2}e_4 \in S_1\\ 
\epsilon_{32}(e_4)=t_2^{-2}e_4 \in S_1\\ $
\\
Therefore, $S_1$ is a non trivial invariant subspace of $\mathbb{C}^4$ under $\hat{\phi}_G(t_1,t_2,t_3) \otimes \hat{\phi}_B(m_1,m_2,m_3)$. Hence $\hat{\phi}_G(t_1,t_2,t_3) \otimes \hat{\phi}_B(m_1,m_2,m_3)$ is reducible.

For the sufficient condition, suppose that the vectors $(t_1,t_2,t_3)$ and $(m_1,m_2,m_3)$ are distinct. Let $S$ be a non trivial invariant subspace of $\mathbb{C}^4$ under $\hat{\phi}_G(t_1,t_2,t_3) \otimes \hat{\phi}_B(m_1,m_2,m_3)$.
\begin{itemize}
\item[(a)] Suppose $t_3\neq m_3$ and $t_3m_3\neq 1$.\\
In this case, the diagonal matrix $\epsilon_{13}$ has distinct eigenvalues, so $S=<e_i>$ or $S=<e_i,e_j>$ or $S=<e_i,e_j,e_k>$, where $1 \leq i,j,k \leq 4$.
\begin{itemize}
\item[(i)] $S\neq <e_i>$ for all $1\leq i \leq 4$.\\
If $S=<e_1>$, then $\epsilon_{12}(e_1)=\beta_1e_1+\beta_2e_2+\beta_3e_3+\beta_4e_4$ with $\beta_2=t_2^{-1}(t_1-1)m_2^{-1} \neq 0$, which is a contradiction.\\
If $S=<e_2>$, then $\epsilon_{12}(e_2)=\beta_2e_2+\beta_4e_4$ with $\beta_4=m_2^{-1}(m_1-1) \neq 0$, which is a contradiction.\\
If $S=<e_3>$, then $\epsilon_{12}(e_3)=\beta_3e_3+\beta_4e_4$ with $\beta_4=t_2^{-1}(t_1-1) \neq 0$, which is a contradiction.\\
If $S=<e_4>$, then $\epsilon_{21}(e_4)=\beta_1e_1+\beta_2e_2+\beta_3e_3+\beta_4e_4$ with $\beta_1=t_1^{-1}(t_2-1)m_1^{-1}(m_2-1) \neq 0$, which is a contradiction.
\item[(ii)] $S\neq <e_i,e_j>$ for all $1\leq i,j \leq 4$.\\
If $S=<e_1,e_2>$, then $\epsilon_{12}(e_1)=\beta_1e_1+\beta_2e_2+\beta_3e_3+\beta_4e_4$ with $\beta_3=t_2^{-1}m_2^{-1}(m_1-1) \neq 0$, which is a contradiction.\\
If $S=<e_1,e_3>$, then $\epsilon_{12}(e_1)=\beta_1e_1+\beta_2e_2+\beta_3e_3+\beta_4e_4$ with $\beta_2=t_2^{-1}(t_1-1)m_2^{-1} \neq 0$, which is a contradiction.\\
If $S=<e_1,e_4>$, then $\epsilon_{12}(e_1)=\beta_1e_1+\beta_2e_2+\beta_3e_3+\beta_4e_4$ with $\beta_2=t_2^{-1}(t_1-1)m_2^{-1}  \neq 0$, which is a contradiction.\\
If $S=<e_2,e_3>$, then $\epsilon_{12}(e_2)=\beta_2e_2+\beta_4e_4$ with $\beta_4=m_2^{-1}(m_1-1) \neq 0$, which is a contradiction.\\
If $S=<e_2,e_4>$, then $\epsilon_{31}(e_2)=\beta_1e_1+\beta_2e_2$ with $\beta_1=-t^{-1}_1(t_2-1)m_1^{-1} \neq 0$, which is a contradiction.\\
If $S=<e_3,e_4>$, then $\epsilon_{31}(e_3)=\beta_1e_1+\beta_3e_3$ with $\beta_1=-t_1^{-1}m_1^{-1}(m_2-1) \neq 0$, which is a contradiction.
\item[(iii)] $S\neq <e_i,e_j,e_k>$ for all $1\leq i,j,k \leq 4$.\\
If $S=<e_1,e_2,e_3>$, then $\epsilon_{12}(e_1)=\beta_1e_1+\beta_2e_2+\beta_3e_3+\beta_4e_4$ with $\beta_4=t_2^{-1}(t_1-1)m_2^{-1}(m_1-1) \neq 0$, which is a contradiction.\\
If $S=<e_1,e_2,e_4>$, then $\epsilon_{12}(e_1)=\beta_1e_1+\beta_2e_2+\beta_3e_3+\beta_4e_4$ with $\beta_3=t_2^{-1}m_2^{-1}(m_1-1) \neq 0$, which is a contradiction.\\
If $S=<e_1,e_3,e_4>$, then $\epsilon_{12}(e_1)=\beta_1e_1+\beta_2e_2+\beta_3e_3+\beta_4e_4$ with $\beta_2=t_2^{-1}(t_1-1)m_2^{-1} \neq 0$, which is a contradiction.\\
If $S=<e_2,e_3,e_4>$, then $\epsilon_{31}(e_4)=\beta_1e_1+\beta_2e_2+\beta_3e_3+\beta_4e_4$ with $\beta_1=t_1^{-1}(t_2-1)m_1^{-1}(m_2-1) \neq 0$, which is a contradiction.
\end{itemize}
\item[(b)] Suppose $t_3\neq m_3$ and $t_3m_3=1$.\\
In addition to the subspaces mentioned in (a), we have other possible candidates to invariant subspaces. More precisely, we consider $S=<a_1e_1+a_4e_4>$ or $S=<e_2,a_1e_1+a_4e_4>$ or $S=<e_3,a_1e_1+a_4e_4>$ or $S=<e_2,e_3,a_1e_1+a_4e_4>$, $a_1 \neq 0$ and $a_4 \neq 0$.
\begin{itemize}
\item[(i)] $S\neq<a_1e_1+a_4e_4>.$\\
If $S=<a_1e_1+a_4e_4>$, then $\epsilon_{12}(a_1e_1+a_4e_4)=\beta_1e_1+\beta_2e_2+\beta_3e_3+\beta_4e_4$ with $\beta_2=a_1t_2^{-1}(t_1-1)m_2^{-1} \neq 0$, which is a contradiction.
\item[(ii)] $S\neq <e_2,a_1e_1+a_4e_4>$.\\
If $S=<e_2,a_1e_1+a_4e_4>$, then $\epsilon_{12}(e_2)=\beta_2e_2+\beta_4e_4$ with $\beta_4=m_2^{-1}(m_1-1) \neq 0$, and so $e_4 \in S$. This gives a contradiction.
\item[(iii)] $S\neq <e_3,a_1e_1+a_4e_4>$.\\
If $S=<e_3,a_1e_1+a_4e_4>$, then $\epsilon_{12}(e_3)=\beta_3e_3+\beta_4e_4$ with $\beta_4=t_2^{-1}(t_1-1) \neq 0$, and so $e_4 \in S$. This gives a contradiction.
\item[(iv)]  $S\neq <e_2,e_3,a_1e_1+a_4e_4>$.\\
If $S=<e_2,e_3,a_1e_1+a_4e_4>$, then $\epsilon_{12}(e_2)=\beta_2e_2+\beta_4e_4$ with $\beta_4=m_2^{-1}(m_1-1) \neq 0$, and so $e_4 \in S$. This gives a contradiction.
\end{itemize}
\item[(c)] Suppose $t_3=m_3$ and $t_3m_3\neq 1$, then $t_1\neq m_1$ or $t_2\neq m_2$.\\
In addition to the subspaces mentioned in (a), we also have other possible invariant subspaces. For instance, we consider $S=<a_2e_2+a_3e_3>$ or $S=<e_1,a_2e_2+a_3e_3>$ or $S=<e_4,a_2e_2+a_3e_3>$ or $S=<e_1,e_4,a_2e_2+a_3e_3>$, $a_2\neq 0$ and $a_3\neq 0$.
\begin{itemize}
\item[(i)]$S\neq <a_2e_2+a_3e_3>$.\\
If $S=<a_2e_2+a_3e_3>$, then $\epsilon_{12}(a_2e_2+a_3e_3)=m_2^{-1}a_2e_2+t_2^{-1}a_3e_3+\gamma_4e_4\in S$ and $\epsilon_{31}(a_2e_2+a_3e_3)=m_1^{-1}a_2e_2+t_1^{-1}a_3e_3+\delta_4e_4 \in S$, where $\gamma_4$ and $\delta_4$ are scalars. So $\epsilon_{12}(a_2e_2+a_3e_3)= \lambda_1(a_2e_2+a_3e_3)$ and $\epsilon_{31}(a_2e_2+a_3e_3)=\lambda_2(a_2e_2+a_3e_3)$. Thus we get $\lambda_1a_2=m_2^{-1}a_2$ and $\lambda_1a_3=t_2^{-1}a_3$, which implies that $\lambda_1=t^{-1}_2=m^{-1}_2$. Similarly, we have $\lambda_2a_2=m_1^{-1}a_2$ and $\lambda_2a_3=t_1^{-1}a_3$, which implies that $\lambda_2=t^{-1}_1=m^{-1}_1$. This contradicts the fact that $t_1\neq m_1$ or $t_2\neq m_2$.\\
\item[(ii)]$S\neq <e_1,a_2e_2+a_3e_3>$.\\
If $S=<e_1,a_2e_2+a_3e_3>$, then $\epsilon_{12}(e_1)=\beta_1e_1+\beta_2e_2+\beta_3e_3+\beta_4e_4$ with $\beta_4=t_2^{-1}(t_1-1)m_2^{-1}(m_1-1) \neq 0$, which is a contradiction.
\item[(iii)]$S\neq <e_4,a_2e_2+a_3e_3>$.\\
If $S=<e_4,a_2e_2+a_3e_3>$, then $\epsilon_{21}(e_4)=\beta_1e_1+\beta_2e_2+\beta_3e_3+\beta_4e_4$ with $\beta_1=t_1^{-1}(t_2-1)m_1^{-1}(m_2-1) \neq 0$, which is a contradiction.
\item[(iv)]$S\neq <e_1,e_4,a_2e_2+a_3e_3>$.\\
If $S=<e_1,e_4,a_2e_2+a_3e_3>$, then $\epsilon_{12}(a_2e_2+a_3e_3)=m_2^{-1}a_2e_2+t_2^{-1}a_3e_3+\gamma_4e_4\in S$ and $\epsilon_{31}(a_2e_2+a_3e_3)=m_1^{-1}a_2e_2+t_1^{-1}a_3e_3+\delta_4e_4 \in S$, where $\gamma_4$ and $\delta_4$ are scalars. So $\epsilon_{12}(a_2e_2+a_3e_3)= \lambda_1(a_2e_2+a_3e_3)+ \gamma_1e_1+ \omega_1e_4$ and $\epsilon_{31}(a_2e_2+a_3e_3)=\lambda_2(a_2e_2+a_3e_3)+ \gamma_2e_1+ \omega_2e_4$, where $\lambda_1$, $\lambda_2$, $\gamma_1$, $\gamma_2$, $\omega_1$, and $\omega_2$ are non-zero scalars. Thus we get $\lambda_1a_2=m_2^{-1}a_2$ and $\lambda_1a_3=t_2^{-1}a_3$, which implies that $\lambda_1=t^{-1}_2=m^{-1}_2$. Similarly, we have $\lambda_2a_2=m_1^{-1}a_2$ and $\lambda_2a_3=t_1^{-1}a_3$, which implies that $\lambda_2=t^{-1}_1=m^{-1}_1$. This contradicts the fact that $t_1\neq m_1$ or $t_2\neq m_2$.
\item[(d)] Suppose $t_3=m_3$ and $t_3m_3=1$, then $t_1\neq m_1$ or $t_2\neq m_2$.\\
In addition to all previous subspaces mentioned in (a), (b) and (c), we may also consider $S=<a_1e_1+a_4e_4,a_2e_2+a_3e_3>$ with $a_i \neq 0$ for all $1 \leq i \leq 4$.\\
If $S=<a_1e_1+a_4e_4,a_2e_2+a_3e_3>$, then $\epsilon_{12}(a_2e_2+a_3e_3)=m_2^{-1}a_2e_2+t_2^{-1}a_3e_3+\gamma_4e_4 \in S$ and $\epsilon_{31}(a_2e_2+a_3e_3)=m_1^{-1}a_2e_2+t_1^{-1}a_3e_3+\delta_4e_4 \in S$, where $\gamma_4$ and $\delta_4$ are non-zero scalars. So $\epsilon_{12}(a_2e_2+a_3e_3)= \lambda_1(a_2e_2+a_3e_3)+\gamma_1(a_1e_1+a_4e_4)$ and $\epsilon_{31}(a_2e_2+a_3e_3)=\lambda_2(a_2e_2+a_3e_3)+\gamma_2(a_1e_1+a_4e_4)$, where $\lambda_1$, $\lambda_2$, $\gamma_1$, and $\gamma_2$ are non-zero scalars. So $\lambda_1a_2=m_2^{-1}a_2$ and $\lambda_1a_3=t_2^{-1}a_3$, which implies that $\lambda_1=t^{-1}_2=m^{-1}_2$. Similarly, $\lambda_2a_2=m_1^{-1}a_2$ and $\lambda_2a_3=t_1^{-1}a_3$, which implies that $\lambda_2=t^{-1}_1=m^{-1}_1$. This contradicts the fact that $t_1\neq m_1$ or $t_2\neq m_2$.
\end{itemize}
\end{itemize}
Thus, $\mathbb{C}^4$ has no non trivial invariant subspace under $\hat{\phi}_G(t_1,t_2,t_3) \otimes \hat{\phi}_B(m_1,m_2,m_3)$. Therefore $\hat{\phi}_G(t_1,t_2,t_3) \otimes \hat{\phi}_B(m_1,m_2,m_3)$ is irreducible.
\end{proof}

\section{The tensor product of complex irreducible representations of $C_3$}   

In this section, we set $n=3$ and we consider the irreducible complex specialization $\hat{\phi}_B$, which is given by
$$\epsilon_{12} \mapsto
\left( \begin{array}{c@{}c@{}}
\begin{matrix}
 t^{-1} & 0 \\
 1-t^{-1} & 1 \\
\end{matrix} 
\end{array} \right), \epsilon_{21} \mapsto
\left( \begin{array}{c@{}c@{}}
\begin{matrix}
 1 & 1-t^{-1} \\
 0 & t^{-1} \\
\end{matrix} 
\end{array} \right), \epsilon_{13} \mapsto
\left( \begin{array}{c@{}c@{}}
\begin{matrix}
 t^{-1} & 0 \\
 0 & 1 \\
\end{matrix} 
\end{array} \right),$$

$$ \epsilon_{31} \mapsto
\left( \begin{array}{c@{}c@{}}
\begin{matrix}
 t^{-1} & t^{-1}-1 \\
 0 & 1 \\
\end{matrix} 
\end{array} \right), \epsilon_{32} \mapsto
\left( \begin{array}{c@{}c@{}}
\begin{matrix}
 1 & 0 \\
 t^{-1}-1 & t^{-1} \\
\end{matrix} 
\end{array} \right), \epsilon_{23} \mapsto
\left( \begin{array}{c@{}c@{}}
\begin{matrix}
 1 & 0 \\
 0 & t^{-1} \\
\end{matrix} 
\end{array} \right),$$

$$\alpha_{1} \mapsto
\left( \begin{array}{c@{}c@{}}
\begin{matrix}
 0 & 1 \\
 1 & 0 \\
\end{matrix} 
\end{array} \right), \alpha_{2} \mapsto
\left( \begin{array}{c@{}c@{}}
\begin{matrix}
 1 & 0 \\
 -1 & -1 \\
\end{matrix} 
\end{array} \right).
$$

Now, we consider the generators of $\hat{\phi}_B(t)\otimes \hat{\phi}_B(m)$. For simplicity, we set $(\hat{\phi}_B(t)\otimes \hat{\phi}_B(m))(\epsilon_{ij})=\epsilon_{ij}$ and $(\hat{\phi}_B(t)\otimes \hat{\phi}_B(m))(\alpha_i)=\alpha_i$.
$$\epsilon_{12} \mapsto
\left( \begin{array}{c@{}c@{}}
\begin{matrix}
 t^{-1}m^{-1} & 0  & 0 & 0\\
 t^{-1}(1-m^{-1}) & t^{-1} & 0 & 0 \\
 m^{-1}(1-t^{-1}) & 0 & m^{-1} & 0 \\
 (1-t^{-1})(1-m^{-1}) & (1-t^{-1}) & (1-m^{-1}) & 1 \\
\end{matrix} 
\end{array} \right),$$

$$\epsilon_{21} \mapsto
\left( \begin{array}{c@{}c@{}}
\begin{matrix}
 1 & 1-m^{-1}  & 1-t^{-1} & (1-t^{-1})(1-m^{-1})\\
 0 & m^{-1} & 0 & m^{-1}(1-t^{-1}) \\
 0 & 0 & t^{-1} & t^{-1}(1-m^{-1}) \\
 0 & 0 & 0 & t^{-1}m^{-1} \\
\end{matrix} 
\end{array} \right),$$

$$\epsilon_{13} \mapsto
\left( \begin{array}{c@{}c@{}}
\begin{matrix}
 t^{-1}m^{-1} & 0  & 0 & 0\\
 0 & t^{-1} & 0 & 0 \\
 0 & 0 & m^{-1} & 0 \\
 0 & 0 & 0 & 1 \\
\end{matrix} 
\end{array} \right),$$

$$\epsilon_{31}\mapsto
\left( \begin{array}{c@{}c@{}}
\begin{matrix}
 t^{-1}m^{-1} & t^{-1}(m^{-1}-1) & m^{-1}(t^{-1}-1) & (t^{-1}-1)(m^{-1}-1)\\
 0 & t^{-1} & 0 & t^{-1}-1 \\
 0 & 0 & m^{-1} & m^{-1}-1 \\
 0 & 0 & 0 & 1 \\
\end{matrix} 
\end{array} \right),$$

$$\epsilon_{23}\mapsto
\left( \begin{array}{c@{}c@{}}
\begin{matrix}
 1 & 0 & 0 & 0\\
 0 & m^{-1} & 0 & 0 \\
 0 & 0 & t^{-1} & 0 \\
 0 & 0 & 0 & t^{-1}m^{-1}\\
\end{matrix} 
\end{array} \right),$$

$$\epsilon_{32}\mapsto
\left( \begin{array}{c@{}c@{}}
\begin{matrix}
1 & 0 & 0 & 0 \\
m^{-1}-1 & m^{-1} & 0 & 0 \\
t^{-1}-1 & 0 & t^{-1} & 0 \\
(t^{-1}-1)(m^{-1}-1) & m^{-1}(t^{-1}-1) & t^{-1}(m^{-1}-1) & t^{-1}m^{-1} \\
\end{matrix} 
\end{array} \right),$$

$$\alpha_{1}\mapsto
\left( \begin{array}{c@{}c@{}}
\begin{matrix}
0 & 0 & 0 & 1 \\
0 & 0 & 1 & 0 \\
0 & 1 & 0 & 0 \\
1 & 0 & 0 & 0 \\
\end{matrix} 
\end{array} \right),$$

$$\alpha_{2}\mapsto
\left( \begin{array}{c@{}c@{}}
\begin{matrix}
1 & 0 & 0 & 0 \\
-1 & -1 & 0 & 0 \\
-1 & 0 & -1 & 0 \\
1 & 1 & 1 & 1 \\
\end{matrix} 
\end{array} \right).$$

\begin{theorem}
For $n=3$, the tensor product representation  $\hat{\phi}_B(t) \otimes \hat{\phi}_B(m)$ is irreducible if and only if $t \neq m$.
\end{theorem}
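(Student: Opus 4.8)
The plan is to follow the scheme of Theorem 8, exploiting the eigenstructure of the diagonal generators $\epsilon_{13}$ and $\epsilon_{23}$, while taking advantage of the two extra permutation generators $\alpha_1$ and $\alpha_2$ that are present here but absent in the $Cb_3$ case. Throughout I would use the fact, already invoked in Theorem 8, that a diagonalizable operator preserving $S$ forces $S$ to split as a direct sum of its intersections with the eigenspaces.

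For the necessary condition, assume $t=m$. Then every generator $g$ acts as $A\otimes A$ with $A=\hat{\phi_B}(t)(g)$, so the representation is the tensor square of $\hat{\phi_B}(t)$. Consequently the antisymmetric line $\langle e_2-e_3\rangle$ is invariant, each generator acting on it by the scalar $\det A$. I would simply record that $\epsilon_{ij}(e_2-e_3)$ and $\alpha_i(e_2-e_3)$ are scalar multiples of $e_2-e_3$; for instance $\epsilon_{12}(e_2-e_3)=t^{-1}(e_2-e_3)$ and $\alpha_1(e_2-e_3)=-(e_2-e_3)$. This exhibits $\langle e_2-e_3\rangle$ as a one-dimensional invariant subspace and proves reducibility.

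For the sufficient condition, assume $t\neq m$; recall also $t\neq 1$ and $m\neq 1$ by Theorem 7. Let $S$ be a nonzero invariant subspace of $\mathbb{F}_4$. I would organize the argument around $\epsilon_{13}=\mathrm{diag}(t^{-1}m^{-1},t^{-1},m^{-1},1)$, splitting into the two cases $tm\neq 1$ and $tm=1$ (the analogues of cases (a) and (b) of Theorem 8; the cases $t_3=m_3$ there are excluded here by $t\neq m$). If $tm\neq 1$, the four eigenvalues are pairwise distinct, so $S$ is a coordinate subspace $\langle e_i : i\in I\rangle$, and I would rule out every proper nonempty $I$ exactly as in Theorem 8, applying $\epsilon_{12}$, $\epsilon_{21}$, $\epsilon_{31}$ to produce a forbidden nonzero coordinate, using the symmetry $\alpha_1$ (which swaps $e_1\leftrightarrow e_4$ and $e_2\leftrightarrow e_3$) to halve the casework. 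If $tm=1$, then $\epsilon_{13}=\mathrm{diag}(1,t^{-1},m^{-1},1)$ has the double eigenvalue $1$ on $\langle e_1,e_4\rangle$ and simple eigenvalues on $e_2$ and $e_3$, so $S=(S\cap\langle e_1,e_4\rangle)\oplus(S\cap\langle e_2\rangle)\oplus(S\cap\langle e_3\rangle)$; the candidates then involve a line $\langle a_1e_1+a_4e_4\rangle$ together with $e_2$ and/or $e_3$, each of which I would eliminate by applying $\epsilon_{12}$ or $\epsilon_{21}$.

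The main obstacle is the case $tm=1$, and within it the exclusion of an invariant line $\langle a_1e_1+a_4e_4\rangle$, which is the analogue of the delicate eigenvalue-matching subcases in parts (c) and (d) of Theorem 8. Here I expect $\epsilon_{12}(a_1e_1+a_4e_4)$ to have $e_2$-coordinate $a_1t^{-1}(1-m^{-1})$, nonzero when $a_1\neq 0$ since $m\neq 1$, forcing $e_2\in S$ and a contradiction; the residual possibility $a_1=0$ reduces to $\langle e_4\rangle$, which is killed by the nonzero $e_1$-coordinate $(1-t^{-1})(1-m^{-1})$ of $\epsilon_{21}(e_4)$, and symmetrically $\langle e_1\rangle$ by $\epsilon_{12}(e_1)$. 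Once all proper candidates are excluded in both cases, $S$ must equal $\mathbb{F}_4$, establishing irreducibility.
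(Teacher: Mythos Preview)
Your proposal is correct and diverges from the paper's proof in two places worth noting.

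For the necessary direction, the paper exhibits the three-dimensional symmetric square $\langle e_1, e_2+e_3, e_4\rangle$ and verifies its invariance under all eight generators by an explicit list of eighteen computations. Your choice of the antisymmetric line $\langle e_2-e_3\rangle=\wedge^2 V$, on which each generator acts by its determinant, is both shorter and more conceptual.

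For the sufficient direction with $tm\neq 1$, the paper actually takes the slicker route: having reduced to coordinate subspaces via the diagonal $\epsilon_{23}$, it kills every candidate in one line using only $\alpha_1$ and $\alpha_2$, whereas you plan to replay the $\epsilon_{ij}$-calculations of Theorem~8(a). Both work, but the permutation generators make this case almost trivial.

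For the sufficient direction with $tm=1$, the paper abandons the eigenspace method entirely and instead proves, through a lengthy chain of implications labelled (a)--(d), that no nonzero vector of any given support pattern can lie in a proper invariant $S$. Your plan to retain the eigenspace decomposition of $\epsilon_{13}=\mathrm{diag}(1,t^{-1},m^{-1},1)$, as in Theorem~8(b), is sound and decidedly more economical. One small omission: your sketch lists only the candidates where $S\cap\langle e_1,e_4\rangle$ is a line, but you must also dispose of the cases where it is all of $\langle e_1,e_4\rangle$ (for instance $S=\langle e_1,e_4\rangle$ or $S=\langle e_1,e_4,e_2\rangle$); these fall immediately to the same nonzero $e_2$- or $e_3$-coordinate of $\epsilon_{12}(e_1)$. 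Invoking $\alpha_1$ here as well, which swaps $e_1\leftrightarrow e_4$ and $e_2\leftrightarrow e_3$, would further halve the residual casework.
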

\begin{proof} For the necessary condition, we suppose that $t=m$, and we consider $S_1=\{e_1,e_2+e_3,e_4\}$.\\
$\epsilon_{12}(e_1)=t^{-2}e_1+t^{-1}(1-t^{-1})(e_2+e_3)+(1-t^{-1})^2e_4 \in S_1 \\
\epsilon_{21}(e_1)=e_1 \in S_1 \\
\epsilon_{13}(e_1)=e_1 \in S_1\\
\epsilon_{31}(e_1)=t^{-2}e_1 \in S_1 \\
\epsilon_{23}(e_1)=e_1 \in S_1\\
\epsilon_{32}(e_1)=e_1+(t^{-1}-1)(e_2+e_3)+(t^{-1}-1)^2e_4 \in S_1\\
\epsilon_{12}(e_2+e_3)=t^{-1}(e_2+e_3)+2(1-t^{-1})e_4 \in S_1\\
\epsilon_{21}(e_2+e_3)=2(1-t^{-1})e_1+t^{-1}(e_2+e_3) \in S_1\\ 
\epsilon_{13}(e_2+e_3)=t^{-1}(e_2+e_3) \in S_1\\
\epsilon_{31}(e_2+e_3)=2t^{-1}(1-t^{-1})e_1+t^{-1}(e_2+e_3) \in S_1\\
\epsilon_{23}(e_2+e_3)=t^{-1}(e_2+e_3) \in S_1\\
\epsilon_{32}(e_2+e_3)=t^{-1}(e_2+e_3)+t^{-1}(t^{-1}-1)e_4 \in S_1\\ 
\epsilon_{12}(e_4)=e_4 \in S_1\\ 
\epsilon_{21}(e_4)=(1-t^{-1})^2e_1+t^{-1}(1-t^{-1})(e_2+e_3)+t^{-2}e_4 \in S_1\\ 
\epsilon_{13}(e_4)=t^{-2}e_4 \in S_1\\ 
\epsilon_{31}(e_4)=(t^{-1}-1)^2e_1+(t^{-1}-1)(e_2+e_3)+e_4 \in S_1\\ 
\epsilon_{23}(e_4)=t^{-2}e_4 \in S_1\\ 
\epsilon_{32}(e_4)=t^{-2}e_4 \in S_1\\ $
Therefore, $S_1$ is a non trivial invariant subspace of $\mathbb{C}^4$ under $\hat{\phi}_B(t) \otimes \hat{\phi}_B(m)$, and so $\hat{\phi}_B(t) \otimes \hat{\phi}_B(m)$ is reducible.

For the sufficient condition, we suppose $t \neq m$, $tm \neq 1$, $t\neq 1,$ and $m\neq 1$, then we get that $\epsilon_{23}$ has distinct eigenvalues. Suppose $S$ is a non trivial invariant subspace of $\mathbb{C}^4$ under $\hat{\phi}_B(t) \otimes \hat{\phi}_B(m)$, then $S=<e_i>$ or $S=<e_i,e_j>$ or $S=<e_i,e_j,e_k>$, where $1\leq i,j,k\leq 4$.
\begin{itemize}
\item[(a)] $S \neq <e_i>$ for all $1\leq i\leq 4$.\\
If $S=<e_1>$, then $\alpha_1(e_1)=e_4 \notin S$, which is a contradiction.\\
If $S=<e_2>$, then $\alpha_1(e_2)=e_3 \notin S$, which is a contradiction.\\
If $S=<e_3>$, then $\alpha_1(e_3)=e_2 \notin S$, which is a contradiction.\\
If $S=<e_4>$, then $\alpha_1(e_4)=e_1 \notin S$, which is a contradiction.
\item[(b)] $S \neq <e_i,e_j>$ for all $1\leq i,j\leq 4$.\\
If $S=<e_1,e_2>$, then $\alpha_1(e_1)=e_4 \notin S$, which is a contradiction.\\
If $S=<e_1,e_3>$, then $\alpha_1(e_1)=e_4 \notin S$, which is a contradiction.\\
If $S=<e_1,e_4>$, then $\alpha_2(e_1)=e_1-e_2-e_3+e_4 \notin S$, which is a contradiction.\\
If $S=<e_2,e_3>$, then $\alpha_2(e_2)=-e_2+e_4 \notin S$, which is a contradiction.\\
If $S=<e_2,e_4>$, then $\alpha_1(e_2)=e_3 \notin S$, which is a contradiction.\\
If $S=<e_3,e_4>$, then $\alpha_1(e_3)=e_2 \notin S$, which is a contradiction.
\item[(c)] $S \neq <e_i,e_j,e_k>$ for all $1\leq i,j,k\leq 4$.\\
If $S=<e_1,e_2,e_3>$, then $\alpha_1(e_1)=e_4 \notin S$, which is a contradiction.\\
If $S=<e_1,e_2,e_4>$, then $\alpha_1(e_2)=e_3 \notin S$, which is a contradiction.\\
If $S=<e_1,e_3,e_4>$, then $\alpha_1(e_3)=e_2 \notin S$, which is a contradiction.\\
If $S=<e_2,e_3,e_4>$, then $\alpha_1(e_4)=e_1 \notin S$, which is a contradiction.
\end{itemize}
Thus $\hat{\phi}_B(t) \otimes \hat{\phi}_B(m)$ is irreducible in this case.

We then assume that $t \neq m$, $tm \neq 1$ and $t=1.$ It is then clear that $m\neq 1$. Suppose that $S$ is a non trivial invariant subspace of $\mathbb{C}^4$ under $\hat{\phi}_B(t) \otimes \hat{\phi}_B(m)$. In addition to the previous subspaces, we have other possible candidates to invariant subspaces.
\begin{itemize}
\item[(a)] If dim$(S)=1$, then we consider $S=<\beta_1e_1+\beta_3e_3>$ or $S=<\beta_2e_2+\beta_4e_4>$, where $\beta_i \neq 0$ for all $1\leq i \leq 4$.\\
We have $\epsilon_{32}(\beta_1e_1+\beta_3e_3)=\beta_1e_1+\beta_1(m^{-1}-1)e_2+\beta_3e_3+\beta_3(m^{-1}-1)e_4$ with $\beta_3(m^{-1}-1)\neq 0$, which is a contradiction.\\
Similarly, we have $\epsilon_{21}(\beta_2e_2+\beta_4e_4)=\beta_2(1-m^{-1})e_1+\beta_2m^{-1}e_2+\beta_4(1-m^{-1})e_3+\beta_4m^{-1}e_4$ with $\beta_2(1-m^{-1})\neq 0$, which is a contradiction.
\item[(b)] If dim$(S)=2$, then we consider $S=<\beta_1e_1+\beta_3e_3, \beta_2e_2+\beta_4e_4>$. Without loss of generality, we assume that all $\beta_i's$ are non zeros. We have $\epsilon_{32}(\beta_1e_1+\beta_3e_3)=\beta_1e_1+\beta_1(m^{-1}-1)e_2+\beta_3e_3+\beta_3(m^{-1}-1)e_4$, and so $\beta_1\beta_4=\beta_2\beta_3$. Now, $\alpha_2(\beta_1e_1+\beta_3e_3)=\beta_1e_1-\beta_1e_2+(-\beta_1-\beta_3)e_3+(\beta_1+\beta_3)e_4$, and so $-\beta_1-\beta_3=\beta_3$. Thus we get $\beta_3=-\frac{1}{2}\beta_1$. On the other hand, $\alpha_1(\beta_1e_1+\beta_3e_3)=\beta_3e_2+\beta_1e_4\in S$, so $\beta_3e_2+\beta_1e_4=k(\beta_2e_2+\beta_4e_4)$ for some non zero constant $k$. Thus we have $\beta_1\beta_2=\beta_3\beta_4$, which implies that $\beta_1\beta_2\beta_4=\beta_3\beta_4^2$. Having $\beta_1\beta_4=\beta_2\beta_3$, we get $\beta_2^2\beta_3=\beta_3\beta_4^2$ and so $\beta_2^2=\beta_4^2$. This means that $\beta_2= \pm \beta_4$, and so $\beta_1= \pm \beta_3$. This contradicts the fact that $\beta_3=-\frac{1}{2}\beta_1$.
\item[(c)] If dim$(S)=3$, then we assume, without loss of generality, that $S=<\beta_1e_1+\beta_3e_3, \beta_2e_2+\beta_4e_4, e_1>$, where $\beta_i \neq 0$ for all $1\leq i \leq 4$. Since we have $e_1 \in S$ and $\beta_1e_1+\beta_3e_3 \in S$, it follows that $e_3 \in S$. On the other hand, $\alpha_1(e_1)=e_4 \in S$ and $ \beta_2e_2+\beta_4e_4 \in S$, which implies that $e_2 \in S$ and so $S= \mathbb{C}^4$. This also gives a contradiction.
\end{itemize}
Thus $\hat{\phi}_B(t) \otimes \hat{\phi}_B(m)$ is irreducible in this case.

Now, we suppose that $t \neq m$ and $tm=1$. It follows that $t\neq 1$ and $m \neq 1$. Suppose $S$ is an invariant nontrivial subspace of $\mathbb{C}^4$ under $\hat{\phi}_B(t) \otimes \hat{\phi}_B(m)$.
\begin{itemize}
\item[(a)] $e_i \notin S$ for any $i=1,2,3,4.$\\
If $e_2 \in S$, then $\epsilon_{32}(e_2)= te_2+(1-t)e_4=X \in S$. Now, we have $X-te_2=(1-t)e_4 \in S$, which implies that $e_4 \in S$. Also, $\epsilon_{31}(e_2)=(t^{-1}-1)e_1+t^{-1}e_2=Y \in S$, and so $Y-t^{-1}e_2=(t^{-1}-1)e_1 \in S$. This implies that $e_1 \in S$.\\
So \begin{equation}
e_2 \in S \implies e_1,e_4 \in S 
\end{equation}
Similarly, if $e_3 \in S$, then $\epsilon_{21}(e_3)=(1-t^{-1})e_1+t^{-1}e_3=X \in S$. Now, we have $X-t^{-1}e_3=(1-t^{-1})e_1 \in S$, which implies that $e_1 \in S$. Also, $\epsilon_{32}(e_3)=t^{-1}e_3+(1-t^{-1})e_4= Y \in S$, and so $Y-t^{-1}e_3=(1-t^{-1})e_4 \in S$. This implies that $e_4 \in S$.\\
So \begin{equation}
e_3 \in S \implies e_1,e_4 \in S 
\end{equation}
Now, if $e_1 \in S$, then $\epsilon_{12}(e_1)=e_1+(t^{-1}-1)e_2+(t-1)e_3+(2-t-t^{-1})e_4=X \in S$, and $\epsilon_{32}(e_1)=e_1+(t-1)e_2+(t^{-1}-1)e_3+(2-t-t^{-1})e_4=Y \in S$. Then $X-Y=(-t+t^{-1})(e_2-e_3) \in S$, which implies that $e_2-e_3 \in S$. \\
So \begin{equation}
e_1 \in S \implies e_2-e_3 \in S 
\end{equation}
Similarly, if $e_4 \in S$, then $\epsilon_{21}(e_4)=(2-t-t^{-1})e_1+(t-1)e_2+(t^{-1}-1)e_3+e_4=X \in S$, and $\epsilon_{31}(e_4)=(2-t-t^{-1})e_1+(t^{-1}-1)e_2+(t-1)e_3+e_4=Y \in S$. Then $X-Y=(t-t^{-1})(e_2-e_3) \in S$, which implies that $e_2-e_3 \in S$. \\
So \begin{equation}
e_4 \in S \implies e_2-e_3 \in S. 
\end{equation}
Now, suppose that $e_2 \in S$. Then by (1), we have $e_1$ and $e_4 \in S$, and so by (3), we get $e_2-e_3 \in S$. This implies that $e_3 \in S$, and so $S=\mathbb{C}^4$, which is a contradiction. Hence $e_2 \notin S$. \\
Similarly, suppose that $e_3 \in S$. Then by (2), we have $e_1$ and $e_4 \in S$, and so by (3), we get $e_2-e_3 \in S$. This implies that $e_2 \in S$, and so $S=\mathbb{C}^4$, which is a contradiction. Hence $e_3 \notin S$. \\
Now, suppose that $e_1 \in S$. Then $\epsilon_{12}(e_1)-e_1=(t^{-1}-1)e_2+(t-1)e_3+(2-t-t^{-1})e_4=X \in S$ and $\epsilon_{32}(e_1)-e_1=(t-1)e_2+(t^{-1}-1)e_3+(2-t-t^{-1})e_4=Y \in S$. So $Z=X+Y=(2-t-t^{-1})(-e_2-e_3+2e_4) \in S$, which means that $W=-e_2-e_3+2e_4 \in S$. But $\epsilon_{31}(Y)+X=2(2-t-t^{-1})e_4 \in S$, which implies that $e_4 \in S$, and so $W-2e_4=-e_2-e_3 \in S$. By (3), we have $e_2-e_3 \in S$, so $e_2 \in S$, which is a contradiction. Hence $e_1 \notin S$.\\
Similarly, suppose that $e_4 \in S$. Then $\epsilon_{21}(e_4)-e_4=(2-t-t^{-1})e_1+(t-1)e_2+(t^{-1}-1)e_3=X \in S$ and $\epsilon_{31}(e_4)-e_4=(2-t-t^{-1})e_1+(t^{-1}-1)e_2+(t-1)e_3=Y \in S$. So $Z=X+Y=(2-t-t^{-1})(2e_1-e_2-e_3) \in S$, which means that $W=2e_1-e_2-e_3 \in S$. But $\epsilon_{23}(y)+X=2(2-t-t^{-1})e_1 \in S$, which implies that $e_1 \in S$, and then $W-2e_1=-e_2-e_3 \in S$. By (4), we have $e_2-e_3 \in S$, so $e_2 \in S$, which is a contradiction. Hence $e_4 \notin S$.\\
Therefore, $e_i \notin S$ for any $i=1,2,3,4.$
\item[(b)] $\alpha_ie_i+\alpha_je_j \notin S$ for any $1 \leq i\neq j \leq 4$.\\
Suppose that $\alpha_1e_1+\alpha_2e_2 \in S$ with $\alpha_1 \neq 0$ and $\alpha_2 \neq 0$. Then $\epsilon_{23}(\alpha_1e_1+\alpha_2e_2)=\alpha_1e_1+t\alpha_2e_2=X \in S$, and so $X-(\alpha_1e_1+\alpha_2e_2) \in S$. This implies that $-(t-1)\alpha_2e_2 \in S$. And so $e_2 \in S$, which contradicts (a). Thus  $\alpha_1e_1+\alpha_2e_2 \notin S$.\\
\\
Suppose that $\alpha_1e_1+\alpha_3e_3 \in S$ with $\alpha_1 \neq 0$ and $\alpha_3 \neq 0$. Then $\epsilon_{23}(\alpha_1e_1+\alpha_3e_3)=\alpha_1e_1+t^{-1}\alpha_3e_3=X \in S$, and so $X-(\alpha_1e_1+\alpha_3e_3) \in S$. This implies that $-(t^{-1}-1)\alpha_3e_3 \in S$. Then we get $e_3 \in S$, this contradicts (a). Thus  $\alpha_1e_1+\alpha_3e_3 \notin S$.\\
\\
Suppose that $\alpha_2e_2+\alpha_3e_3 \in S$, with $\alpha_2 \neq 0$ and $\alpha_3 \neq 0$. Then $\epsilon_{32}(\alpha_2e_2+\alpha_3e_3)=t\alpha_2e_2+t^{-1}\alpha_3e_3+((1-t)\alpha_2+(1-t^{-1})\alpha_3)e_4=X_1 \in S$ and $\epsilon_{23}(\alpha_2e_2+\alpha_3e_3)=t\alpha_2e_2+t^{-1}\alpha_3e_3=Y_1 \in S$. So $Z_1=X_1-Y_1=((1-t)\alpha_2+(1-t^{-1})\alpha_3)e_4 \in S$. On the other hand, $\epsilon_{12}(\alpha_2e_2+\alpha_3e_3)=t^{-1}\alpha_2e_2+t\alpha_3e_3+((1-t^{-1})\alpha_2+(1-t)\alpha_3)e_4=X_2 \in S$ and  $\epsilon_{13}(\alpha_2e_2+\alpha_3e_3)=t^{-1}\alpha_2e_2+t\alpha_3e_3=Y_2 \in S$. And so $Z_2=X_2-Y_2=((1-t^{-1})\alpha_2+(1-t)\alpha_3)e_4 \in S$. Hence, $Z_1+Z_2=(-t-t^{-1}+2)(\alpha_2+\alpha_3)e_4 \in S$. But $e_4 \notin S$ by (a), then $\alpha_2+\alpha_3=0$, and so $\epsilon_{13}(\alpha_2e_2+\alpha_3e_3)-\epsilon_{31}(\alpha_2e_2+\alpha_3e_3)=(t-t^{-1})e_1 \in S$. This implies that $e_1 \in S$, which contradicts (a). Thus $\alpha_2e_2+\alpha_3e_3 \notin S$.\\
\\
Suppose that $\alpha_1e_1+\alpha_4e_4 \in S$ with $\alpha_1 \neq 0$ and $\alpha_4 \neq 0$. Then $\epsilon_{12}(\alpha_1e_1+\alpha_4e_4)=\alpha_1e_1+\alpha_1(t^{-1}-1))e_2+\alpha_1(t-1)e_3+(\alpha_1(2-t-t^{-1})+\alpha_4)e_4=X \in S$ and $\epsilon_{32}(\alpha_1e_1+\alpha_4e_4)=\alpha_1e_1+\alpha_1(t-1)e_2+\alpha_1(t^{-1}-1)e_3+(\alpha_1(2-t-t^{-1})+\alpha_4)e_4=Y \in S$. So $X-Y= \alpha_1(-t+t^{-1})e_2+\alpha_1(-t^{-1}+t)e_3 \in S$, which contradicts the previous result. Thus $\alpha_1e_1+\alpha_4e_4 \notin S$.\\
\\
Suppose that $\alpha_2e_2+\alpha_4e_4 \in S$ with $\alpha_2 \neq 0$ and $\alpha_4 \neq 0$. Then $\epsilon_{23}(\alpha_2e_2+\alpha_4e_4)-(\alpha_2e_2+\alpha_4e_4)=(t-1)\alpha_2e_2\in S$, and so $e_2 \in S$, which contradicts (a). Thus $\alpha_2e_2+\alpha_4e_4 \notin S$.\\
\\
Suppose that $\alpha_3e_3+\alpha_4e_4 \in S$ with $\alpha_3 \neq 0$ and $\alpha_4 \neq 0$. Then $\epsilon_{23}(\alpha_3e_3+\alpha_4e_4)-(\alpha_3e_3+\alpha_4e_4)=(t^{-1}-1)\alpha_3e_3\in S$, and so $e_3 \in S$, which contradicts (a). So $\alpha_3e_3+\alpha_4e_4 \notin S$.\\
Therefore, $\alpha_ie_i+\alpha_je_j \notin S$ for any $1 \leq i\neq j \leq 4$.
\item[(c)] $\alpha_ie_i+\alpha_je_j+\alpha_ke_k \notin S$ for any $1 \leq i\neq j\neq k \leq 4$.\\
Suppose that $\alpha_1e_1+\alpha_2e_2+\alpha_3e_3 \in S$ with $\alpha_1 \neq 0, \alpha_2 \neq 0,$ and $\alpha_3 \neq 0$. Then $\epsilon_{13}(\alpha_1e_1+\alpha_2e_2+\alpha_3e_3)-(\alpha_1e_1+\alpha_2e_2+\alpha_3e_3)= (t-1)\alpha_2e_2+(t^{-1}-1)\alpha_3e_3 \in S$, which contradicts (b). So $\alpha_1e_1+\alpha_2e_2+\alpha_3e_3 \notin S$.\\
\\
Suppose that $\alpha_1e_1+\alpha_2e_2+\alpha_4e_4 \in S$ with $\alpha_1 \neq 0, \alpha_2 \neq 0,$ and $\alpha_4 \neq 0$. Then $\epsilon_{13}(\alpha_1e_1+\alpha_2e_2+\alpha_4e_4)-(\alpha_1e_1+\alpha_2e_2+\alpha_4e_4)= (t-1)\alpha_2e_2 \in S$, and so $e_2 \in S$, which contradicts (a). So $\alpha_1e_1+\alpha_2e_2+\alpha_4e_4 \notin S$.\\
\\
Suppose that $\alpha_1e_1+\alpha_3e_3+\alpha_4e_4 \in S$ with $\alpha_1 \neq 0, \alpha_3 \neq 0,$ and $\alpha_4 \neq 0$. Then $\epsilon_{13}(\alpha_1e_1+\alpha_3e_3+\alpha_4e_4)-(\alpha_1e_1+\alpha_3e_3+\alpha_4e_4)= (t^{-1}-1)\alpha_3e_3 \in S$, and so $e_3 \in S$, which contradicts (a). So $\alpha_1e_1+\alpha_3e_3+\alpha_4e_4 \notin S$.\\
\\
Suppose that $\alpha_2e_2+\alpha_3e_3+\alpha_4e_4 \in S$ with $\alpha_2 \neq 0, \alpha_3 \neq 0,$ and $\alpha_4 \neq 0$. Then $\epsilon_{13}(\alpha_2e_2+\alpha_3e_3+\alpha_4e_4)-(\alpha_2e_2+\alpha_3e_3+\alpha_4e_4)= (t-1)\alpha_2e_2+(t^{-1}-1)\alpha_3e_3 \in S$, which contradicts (b). So $\alpha_2e_2+\alpha_3e_3+\alpha_4e_4 \notin S$.\\
Therefore, $\alpha_ie_i+\alpha_je_j+\alpha_ke_k \notin S$ for any $1 \leq i\neq j\neq k \leq 4$.
\item[(d)] $\alpha_1e_1+\alpha_2e_2+\alpha_3e_3+\alpha_4e_4 \notin S$.\\
Suppose that $\alpha_1e_1+\alpha_2e_2+\alpha_3e_3+\alpha_4e_4 \in S$. Then $\epsilon_{13}(\alpha_1e_1+\alpha_2e_2+\alpha_3e_3+\alpha_4e_4)-(\alpha_1e_1+\alpha_2e_2+\alpha_3e_3+\alpha_4e_4)= (t-1)\alpha_2e_2+(t^{-1}-1)\alpha_3e_3 \in S$, which contradicts (b).\\
Therefore, $\alpha_1e_1+\alpha_2e_2+\alpha_3e_3+\alpha_4e_4 \notin S$.
\end{itemize}
Thus, $\mathbb{C}^4$ contains no nontrivial invariant subspace under $\hat{\phi}_{B}(t) \otimes \hat{\phi}_{B}(m)$. Therefore, $\hat{\phi}_{B}(t) \otimes \hat{\phi}_{B}(m)$ is irreducible.
\end{proof}


\end{document}